\newtheorem{theorem}{Theorem}[section]
\theoremstyle{definition}
\newtheorem{proposition}[theorem]{Proposition}
\newtheorem{lemma}[theorem]{Lemma}
\newtheorem{definition}[theorem]{Definition}
\newtheorem{remark}[theorem]{Remark}
\newtheorem{corollary}[theorem]{Corollary}
\newtheorem{example}[theorem]{Example}
\def\BN{\mathbb N}
\def\BZ{\mathbb Z}
\def\BQ{\mathbb Q}
\def\calF{\mathcal F}
\def\la{\langle}
\def\ra{\rangle}
\def\al{\alpha}
\def\ve{\varepsilon}
\def\be { \begin{equation} }
\def\ee { \end{equation} }
\def\sL{\mathsf{L}}
\def\sM{\mathsf{M}}
\def\Ann{\mathrm{Ann}}
\def\GL{\mathrm{GL}}
\def\Zt{\mathsf{R}}
\def\cW{\mathbb W}
\def\cT{\mathbb W} 
\def\cR{\mathsf{R}}
\def\ot{\otimes}
\def\Ext{\operatorname{Ext}}
\def\Trp{{\cT_{r,+}}}
\def\Tr{{\cT_r}}
\def\codim{\mathrm{codim}}
\def\cF{\mathcal F}
\def\rk{\mathrm{dim}}
\def\fM{\mathfrak M}
\def\fm{{\mathfrak m}}
\def\bk{\mathbf k}
\newcommand\no[1]{ }
\def\hbt{\hat \boxtimes}
\def\str{\mathrm{str}}
\def\tI{\tilde I}
\def\tg{\tilde g}
\def\cL{{\mathcal L}}
\def\TrL{\mathbb W_{r,\cL}}
\def\TrLi{\mathbb W_{r,\cL_i}}
\begin{document}

\title[A survey of $q$-holonomic functions]{A survey of $q$-holonomic
functions}

\author{Stavros Garoufalidis}
\address{School of Mathematics \\
         Georgia Institute of Technology \\
         Atlanta, GA 30332-0160, USA \newline
         {\tt \url{http://www.math.gatech.edu/~stavros}}}
\email{stavros@math.gatech.edu}

\author[Thang  T. Q. L\^e]{Thang  T. Q. L\^e}
\address{School of Mathematics \\
         Georgia Institute of Technology \\
         Atlanta, GA 30332-0160, USA \newline
         {\tt \url{http://www.math.gatech.edu/~letu}}}
\email{letu@math.gatech.edu}

\thanks{
{\em Key words and phrases: $q$-holonomic functions, Bernstein inequality,
multisums, quantum Weyl algebra, knots, colored Jones polynomial, colored
HOMFLY polynomial.
}
}

\date{September 20, 2016}


\begin{abstract}
We give a survey of basic facts of $q$-holonomic functions of one or 
several variables, following Zeilberger and Sabbah. We provide
detailed proofs and examples.
\end{abstract}

\maketitle

\tableofcontents


\section{Introduction}
\label{sec.intro}

In his seminal paper~\cite{Z90} Zeilberger introduced the class of holonomic
functions (in several discrete or continuous variables), and proved that
it is closed under several operations (including sum and product).
Zeilberger's main theorem asserts that combinatorial identities
of multivariable binomial sums can be proven automatically, by exhibiting
a certificate of a recursion for such sums, and by checking a finite
number of initial conditions. Such a recursion is guaranteed within the
class of holonomic functions, and an effective certificate can be
computed by Zeilberger's telescoping methods~\cite{Z90,WZ}.
Numerous examples of this philosophy were given in the book~\cite{PWZ}.

Holonomic sequences of one variable are those that
satisfy a linear recursion with polynomial coefficients. Holonomic sequences
of two (or more variables) also satisfy a linear recursion with polynomial
coefficients with respect to each variable, but they usually satisfy
additional linear recursions that form a maximally overdetermined system.
The precise definition of holonomic functions requires a theory of
dimension (developed using homological algebra) and a key Bernstein
inequality.

Extending Wilf-Zeilberger's class of holonomic functions to the class
of $q$-holonomic functions is by no means obvious, and was achieved by
Sabbah~\cite{Sab}. Sabbah's article was written using the 
language of homological algebra.

The distance between Zeilberger's and
Sabbah's papers is rather large: the two papers were written for different
audiences and were read by largely disjoint audiences. The purpose of our
paper is to provide a bridge between Zeilberger's and Sabbah's
paper, and in particular to translate Sabbah's article into the class
of multivariate functions. En route, we decided to give a self-contained
survey (with detailed proofs and examples) of basic properties of
$q$-holonomic functions of one or several variables.
We claim no originality of the results presented here, except perhaps
of a proof that multisums of $q$-holonomic functions are $q$-holonomic,
in all remaining variables (Theorem \ref{thm.multisum}).
This last property is
crucial for $q$-holonomic functions that arise naturally in quantum topology.
In fact, quantum knot invariants, such as  the colored Jones
polynomial of a knot or link (colored by irreducible representations
of a simple Lie algebra), and the HOMFLY-PT polynomial of a link, colored by
partitions with a fixed number of rows are multisums of $q$-proper
hypergeometric functions~\cite{GL,GLL}. Therefore, they are $q$-holonomic
functions.

We should point out a difference in how recurrence relations are viewed
in quantum topology versus in combinatorics. In the former, minimal order
recurrence relations often have geometric meaning,
and in the case of the Jones or HOMFLY-PT polynomial of
a knot, is conjectured to be a deformation of the character variety
of the link complement~\cite{Ga:AJ,Le,Le2,LT,LZ}. In the latter, 
recurrence relations are used as a convenient way to automatically prove 
combinatorial identities.

Aside from the geometric interpretation of a recurrence for the colored
Jones polynomial of a knot, and for the natural problem of
computing or guessing such recursions, we should point out that such
recursions can also be used to numerically compute several terms of the 
asymptotics of
the colored Jones polynomial at complex roots of unity, a fascinating
story that connects quantum topology to hyperbolic geometry and
number theory. For a sample of computations, the reader may
consult~\cite{Ga:arbeit,GK,GK2,GZ}.


\section{$q$-holonomic functions of one variable}
\label{sec:holonomic}

Throughout the paper $\BZ$, $\BN$ and $\BQ$ denotes the sets of integers, 
non-negative integers, and rational numbers respectively. We will fix
a field $\bk$ of characteristic zero, and a variable $q$ transcendental over
$\bk$.   
Let $\cR=\bk(q)$ denote the field of rational functions on a variable $q$ 
with coefficients in $\bk$..

\subsection{Recurrence relations}
\label{sub.recurrence}

One of the best-known sequences of natural numbers is the Fibonacci sequence
$F(n)$ for $n \in \BN$
that satisfies the recurrence relation
$$
F(n+2)-F(n+1)-F(n)=0, \qquad F(0)=0, F(1)=1 \,.
$$
Similarly, one can consider a $q$-version of the Fibonacci sequence $f(n)$
for $n \in \BN$  that satisfies the recurrence relation
$$
f(n+2)-f(n+1)-qf(n)=0, \qquad f(0)=1, f(1)=2 \,.
$$
In that case, $f(n) \in \BZ[q]$ is a sequence of polynomials in $q$
with integer coefficients.

A $q$-holonomic sequence is one that satisfies a nonzero
linear recurrence with coefficients that are polynomials in $q$ and $q^n$.
More precisely, we say that a function $f: \BN \to V$, where  $V$ is an 
$\cR$-vector space, is $q$-holonomic if there exists $d \in \BN$ and 
$a_j(u,v) \in \bk[u,v]$ for $j=0,\dots$ with $a_d \neq 0$ such that 
for all natural numbers $n$ we have
\be
\label{eq.ad0}
a_d(q^n,q) f(n+d) + \dots + a_0(q^n,q) f(n) =0 \,.
\ee

\subsection{Operator form of recurrence relations}
\label{sub.operator}

We can convert the above definition in operator form as follows. 
Let $V$ an $\cR$-vector space. Let $S_{1,+}(V)$ denote the set of functions 
$f: \BN \to V$, and consider the
operators $\sL$ and $\sM$ that act on $S_{1,+}(V)$ by
\be
\label{eq.ML}
(\sL f)(n) = f(n+1), \qquad (\sM f)(n) = q^n f(n) \,.
\ee
It is easy to see that $\sL$ and $\sM$ satisfy the
$q$-commutation relation $\sL \sM= q \sM \sL$. The algebra
$$
\cT_+ := \Zt\la \sM, \sL \ra/(\sL \sM - q \sM \sL)
$$
is called the quantum plane. Equation~\eqref{eq.ad0} can be written in the
form
$$
P f=0, \qquad P=\sum_{j=0}^d a_j(\sM, q)\sL^j \in \cT_+ \,.
$$
Given any $f \in S_{1,+}(V)$, the set
$$
\Ann_+(f)=\{ P \in \cT_+ \, | Pf=0 \}
$$
is a left ideal of $\cT_+$, and the corresponding submodule $M_{f,+}=\cT_+ f$
of $S_{1,+}(V)$ generated by $f$ is cyclic and isomorphic to $\cT_+/\Ann_+(f)$.
In other words, $M_{f,+} \subset S_1(V)$ consists of all functions obtained by
applying a recurrence operator $P \in \cT_+$ to $f$.
Then, we have the following.

\begin{definition}
\label{def.qholo1+}
$f \in S_{1,+}(V)$ is $q$-holonomic if $\Ann_+(f) \neq \{0\}$.
\end{definition}

Before we proceed further, let us give some elementary examples of
$q$-holonomic functions.

\begin{example}
\label{ex.fg1}
\rm{(a)}
The function $f(n)=(-1)^n$ is $q$-holonomic since it satisfies the
recurrence relation
$$
f(n+1) + f(n)=0, \qquad n \in \BN \,.
$$
\rm{(b)} The functions $f(n)=q^n$, $g(n)=q^{n^2}$ and $h(n)=q^{n(n-1)/2}$
are $q$-holonomic since they satisfy the recurrence relations
$$
f(n+1) - q f(n) =0, \qquad g(n+1)-q^{2n+1} g(n)=0, \qquad
h(n+1) - q^n h(n)=0, \qquad 
 n \in \BN \,.
$$
However, the function $n \mapsto q^{n^3}$ is not $q$-holonomic. Indeed, if it
satisfied a recurrence relation, divide it by $h(n)$ and reach
a contradiction.
\newline
\rm{(c)} The delta function
$$
\delta(n)=
\begin{cases}
1 \quad &\text{ if } n = 0 \\
0  & \text{otherwise},
\end{cases}
$$
is $q$-holonomic since it satisfies the recurrence relation
$$
(1-q^n)\delta(n)=0, \qquad n \in \BN \,.
$$
\rm{(d)} The quantum factorial function given by
$(q;q)_n=\prod_{k=1}^n(1-q^k)$ for $n \in \BN$
is $q$-holonomic, since it satisfies the recurrence relation
\be
\label{eq.qfac}
(q;q)_{n+1} -(1-q^{n+1})(q;q)_n =0, \qquad n \in \BN \,.
\ee
\rm{(e)} The inverse quantum factorial function given by
$n \to 1/(q;q)_n$ for $n \in \BN$
is $q$-holonomic, since it satisfies the recurrence relation
$$
(1- q^{n+1}) \frac 1 {(q;q)_{n+1}} - \frac 1{(q;q)_n} =0\,.
$$
\rm{(f)} Suppose $\bk= \BQ(x)$. Define the {\em $q$-Pochhammer symbol} 
$(x;q)_n$, for $n\in \BN$, by
$$
(x;q)_n:= \prod_{k=1}^n (1-x q^{k-1}) \,.
$$
Then the function $ n \mapsto (x;q)_n$ is $q$-holonomic over $\bk$, since 
it satisfies the recurrence relation
$$
(x;q)_{n+1} + (x q^n-1)  (x;q)_{n}=0 \,.
$$
\end{example}

\subsection{Extension to functions defined on the integers}
\label{sub.Zextension}

For technical reasons that have to do with specialization and
linear substitution, it is useful to extend the definition of
$q$-holonomic functions to ones defined on the set of integers. Note that in 
the setting of Section~\ref{sub.operator} where the domain of the function 
$f$ is $\BN$, $\sM$ is invertible, but $\sL$ is not.

When the domain is $\BZ$, the definitions of the previous section extend 
almost naturally, but with some important twists that we will highlight. Let
$S_1(V)$ denote the set of functions $f: \BZ \to V$. The operators
$\sL$ and $\sM$ still act on $S_1(V)$ via~\eqref{eq.ML}, only that now
they are invertible and generate the quantum Weyl algebra
$$
\cT := \Zt\la \sM^{\pm 1}, \sL^{\pm 1}\ra/(\sL \sM - q \sM \sL) \,.
$$
Given $f \in S_1(V)$, we can define
\be
\label{eq.ann}
\Ann(f)=\{P \in \cT \,| Pf=0\}
\ee
and the corresponding cyclic module $M_f:= \cT f \subset S_1(V)$.

\begin{definition}
\label{def.fqholo1}
$f \in S_1(V)$ is $q$-holonomic  if $\Ann(f) \neq \{0\}$.
\end{definition}

\begin{remark}
\label{rem.determines}
An important property of $q$-holonomic functions is that a $q$-holonomic 
$f$ (with domain $\BN$ or $\BZ$) is completely determined by a non-trivial 
recurrence relation and a finite set of values: observe that the leading 
and trailing coefficients of the recurrence relation, being polynomials
in $q$ and $q^n$,  are nonzero for all but finitely many $n$. For such
$n$, we can compute $f(n \pm 1)$ from previous values. It follows that
$f$ is uniquely determined by its restriction on a finite set.
\end{remark}

It is natural to ask what happens to a $q$-holonomic function defined
on $\BN$ when we extend it by zero to a function on $\BZ$. It is instructive
to look at part (d) of Example \ref{ex.fg1}. Consider the extension of
$(q;q)_n$ to the integers defined by $(q;q)_n=0$ for $n<0$. The recurrence
relation~\eqref{eq.qfac} cannot be solved backwards when $n=-1$. Moreover, 
the recurrence relation \eqref{eq.qfac} does not hold for $n=-1$ for 
{\em any} value of $(q;q)_{-1}$.
On the other hand, if we multiply~\eqref{eq.qfac} by $1-q^{n+1}$ (which vanishes
exactly when $n=-1$), then we have a valid recurrence relation
$$
(1-q^{n+1})(q;q)_{n+1} -(1-q^{n+1})^2(q;q)_n =0, \qquad n \in \BZ \,.
$$
This observation generalizes easily to a proof of the following.

\begin{lemma}
\label{lem.qholo1}
\rm{(a)} If $f \in S_1(V)$ is $q$-holonomic and $g \in S_{1,+}(V)$
is its restriction to the natural numbers, then $g$ is $q$-holonomic.
\newline
\rm{(b)} Conversely, if $g \in S_{1,+}(V)$ is $q$-holonomic and
$f \in S_1(V)$ is the 0 extension of $g$ (i.e.,
$f(n)=g(n)$ for $n \in \BN$, $f(n)=0$ for $n<0$), then $f$ is $q$-holonomic.
\end{lemma}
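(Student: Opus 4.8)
The plan is to use the natural inclusion $\cT_+ \hookrightarrow \cT$ together with the one feature that distinguishes the two algebras at the level of functions: an operator in $\cT_+$ involves only the forward shift $\sL$ (no $\sL^{-1}$), so for $n \ge 0$ its value at $n$ reads off $f$ only at indices $\ge n \ge 0$. Throughout I would put operators in the normal form $\sum_j b_j(\sM)\,\sL^j$ and repeatedly use the identity $\sL^m b(\sM) = b(q^m\sM)\,\sL^m$, which follows from $\sL\sM = q\sM\sL$.

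For (a) I would start from a nonzero $P \in \Ann(f)$, written in normal form with $j$ ranging over a finite set of (possibly negative) integers, and convert it into a nonzero element of $\cT_+$ annihilating $g$. First left-multiply by a power $\sL^m$ large enough that every exponent $j+m$ is $\ge 0$; then left-multiply by a power $\sM^N$ large enough to clear the remaining negative powers of $\sM$. The result $Q := \sM^N\sL^m P$ lies in $\cT_+$ and is nonzero because $\sM^N\sL^m$ is a unit of $\cT$. Since $Qf = \sM^N\sL^m(Pf) = 0$ on all of $\BZ$ and $Q$ uses only forward shifts, evaluating at $n \ge 0$ involves $f$ only where it agrees with $g$; hence $Qg = 0$ on $\BN$ and $g$ is $q$-holonomic.

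For (b), the interesting direction, I would take a nonzero $Q = \sum_{j=0}^d a_j(\sM,q)\,\sL^j \in \Ann_+(g)$ and view it inside $\cT$. The forward-shift computation again gives $(Qf)(n) = (Qg)(n) = 0$ for $n \ge 0$, and a count of indices shows $(Qf)(n) = 0$ also for $n < -d$, since no term $f(n+j)$ with $0 \le j \le d$ then lands in $\BN$. Thus the defect $h := Qf$ is supported on the finite set $\{-d,\dots,-1\}$, and the crux is to observe---exactly as in the delta-function Example~\ref{ex.fg1}(c)---that a finitely supported function is $q$-holonomic: the operator $R := \prod_{k=1}^d (\sM - q^{-k})$ scales $h(n)$ by $\prod_{k=1}^d(q^n - q^{-k})$, which vanishes precisely at $n \in \{-1,\dots,-d\}$, so $Rh = 0$. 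Then $P := RQ \in \cT$ satisfies $Pf = R(Qf) = 0$.

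The one genuinely algebraic point, and the step I would be most careful about, is verifying $P \ne 0$: here I would invoke that $\cT$ is a domain (the quantum torus over the field $\cR$), so that $R \ne 0$ and $Q \ne 0$ force $RQ \ne 0$; alternatively, inspecting the leading coefficient in $\sL$ shows it equals $R$ times the nonzero leading coefficient of $Q$, which is nonzero in the commutative domain $\cR[\sM^{\pm 1}]$. I expect (a) to be routine once the forward-shift remark is isolated, and the conceptual heart of the lemma to be the finite-support annihilation trick in (b), which is precisely what the preceding discussion of the $0$-extension of $(q;q)_n$ was foreshadowing.
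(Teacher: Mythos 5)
Your proof is correct and takes essentially the same route as the paper: part (a) by left-multiplying an annihilator of $f$ by a monomial unit $\sM^N\sL^m$ to land in $\cT_+$, and part (b) by observing that the defect $Qf$ is supported on $\{-d,\dots,-1\}$ and killing it with a polynomial in $\sM$ --- your $R=\prod_{k=1}^d(\sM-q^{-k})$ coincides with the paper's $\prod_{j=1}^d(1-q^j\sM)$ up to a nonzero scalar in $\cR$. The only (welcome) difference is that you make explicit the nonvanishing of $P=RQ$, which the paper leaves implicit.
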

\begin{proof}
(a)  Suppose $Pf=0$, where $P\in \cT$. Then $\sM^a \sL^b Pf=0$ for all 
$a, b \in \BZ$. For big enough integers $a,b$ we have 
$Q=\sM^a \sL^b P\in \cT_+$ and $Qg=0$. This shows $g$ is $q$-holonomic.

(b) Suppose $Qg=0$ where $0\neq Q\in \cT_+$ and $Q$ has degree $d$ in $\sL$. 
Then $Pf=0$, where $P= \left( \prod_{j=1}^d(1- q^j \sM)\right) Q$. Hence, $f$ 
is $q$-holonomic.
\end{proof}
For stronger statements concerning   more general types of extension, see 
Propositions \ref{prop.modify} and \ref{prop.patching}.  

\begin{example}
\label{ex.fg2}
\rm{(a)} The discrete Heaviside function
\be
\label{eq.heavyside}
H(n) =\begin{cases}
 1 \quad &\text{ if } n \ge 0 \\
 0  & \text{otherwise}
\end{cases}
\ee
is $q$-holonomic since it is the 0 extension of a constant function on 
$\BN$. Alternatively, it satisfies the recurrence relation
$$
(1-q^{n+1})H(n+1)-(1-q^{n+1})H(n)=0, \qquad n \in \BZ \,.
$$
\newline
\rm{(b)} The 0 extensions of all the functions in Example \ref{ex.fg1} 
are $q$-holonomic. In particular, the delta function
$$
\delta: \BZ \to \BZ, \quad \delta(n) =\begin{cases}
 1 \quad &\text{ if } n = 0 \\
 0  & \text{otherwise},
\end{cases}
$$
is $q$-holonomic.
\end{example}


\section{$q$-holonomic functions of several variables}
\label{sec.several}

\subsection{Functions of several variables and the
quantum Weyl algebra}

In this section, we extend our discussion to functions of $r$ variables.
One might think that a $q$-holonomic function of several variables is one
that satisfies a recurrence relation with respect to each variable, when
all others are fixed. Although this is not far from true, this is not always
true. Instead, a $q$-holonomic function needs to satisfy additional recurrence
relations to create a maximally overdetermined system of equations.
Let us explain this now.

For a natural number $r$, let $S_r(V)$ be the set of all functions
$f: \BZ^r \to V$ and $S_{r,+}(V)$ the subset of functions with domain $\BN^r$.
For $i=1,\dots, r$ consider the operators $\sL_i$ and $\sM_i$ which act
on functions $f \in S_r(V)$ by
\begin{align}
(\sL_if)(n_1,\dots, n_i, \dots, n_r) &= f(n_1, \dots, n_i+1, \dots , n_r)\\
(\sM_if)(n_1, \dots, n_r)&= q^{n_i} f(n_1, \dots, n_r).
\end{align}
It is clear that $\sL_i$, $\sM_j$ are invertible operators that satisfy
the $q$-commutation relations
\begin{subequations}
\begin{align}
\sM_i \sM_j &= \sM_j \sM_i
\label{eq.w1}\\
\sL_i \sL_j &=
\sL_j \sL_i
\label{eq.w2}\\
\sL_i \sM_j &= q^{\delta_{i,j}} \sM_j \sL_i
\label{eq.w3}
\end{align}
\end{subequations}
for all $i,j=1,\dots,r$. Here $\delta_{i,j}=1$ when $i=j$ and zero otherwise. 
The {\em $r$-dimensional quantum Weyl algebra $\cT_r$} is the
$\Zt$-algebra generated by
$\sL_1^{\pm 1}, \dots, \sL_r^{\pm 1}, \sM_1^{\pm 1}, \dots, \sM_r^{\pm 1}$
subject to the relations \eqref{eq.w1}--\eqref{eq.w3}. 
Then $\cT_r\cong \cT^{\otimes r}$ and is a Noetherian domain.

Given $f \in S_r(V)$, the {\em annihilator} $\Ann(f)$, which is a left 
$\cT_r$-ideal, is defined as in Equation~\eqref{eq.ann}. The corresponding 
cyclic module $M_f$, defined by $M_f = \cT_r f \subset S_r(V)$, is 
isomorphic to $\cT_r/\Ann(f)$.

Informally, $f$ is $q$-holonomic if $M_f \subset S_r(V)$ is as small as
possible, in a certain measure of complexity. In particular, $\Ann(f)$
must contain recurrence relations with respect to each variable $n_i$
(when all other variables are fixed), but this is not sufficient in general.

\subsection{The case of $\Trp$}
\label{sub.caseWrp}

This and the remaining sections follow closely the work of Sabbah~\cite{Sab}.
Let $\cT_{r,+}$ be the subalgebra of $\cT_r$ generated by non-negative
powers of $\sM_j, \sL_j$.
Our aim is to define the dimension of a finitely generated $\Trp$-module,
to recall the Bernstein inequality (due to Sabbah), and to define
$q$-holonomic $\Trp$-modules.

For $\al=(\al_1,\dots,\al_r)\in \BZ^r$ let $| \al | =\sum_{j=1}^r \al_j$ and
$$ 
\sM^\al = \prod_{j=1}^r \sM_j^{\al_j}, \quad  
\sL^\al = \prod_{j=1}^r \sL_j^{\al_j}
\,.
$$
Consider the increasing filtration $\calF$ on $\cT_{r,+}$ given by
\be
\label{eq.FN}
\calF_k \cT_{r,+} = \{\text{$\cR$-span of all monomials
$\sM^\alpha \sL^\beta$ with $\alpha,\beta \in \BN^r$
and  $| \al |  + |  \beta | \le k$}\} \,.
\ee
Let $M$ be a finitely generated $\Trp$-module. 
The filtration $\calF$ on $\cT_{r,+}$
induces an increasing filtration on $M$, defined by
$\calF_k M=\calF_k \cT_{r,+} \cdot J$ where $J$ is a finite set of 
generators of $M$ as a $\cT_{r,+}$-module. It is easy to see that 
$\calF_k M$ is independent of $J$, and depends only on the $\cT_{r,+}$-module 
$M$. Note that $\calF_k \cT_{r,+}$, and consequently $\calF_k M$, are 
finitely generated $\Zt$-modules for all $k\in \BN$.
An analog of Hilbert's theorem for this non-commutative setting holds:
the $\cR$-dimension of $\calF_k M$
is a polynomial in $k$, for big enough $k$. The degree of this polynomial
is called the dimension of $M$, and is denoted by $d(M)$.

In \cite[Theorem 1.5.3]{Sab} Sabbah proved that $d(M) = 2r -\codim(M)$, where
\be
\label{eq.codim} 
\codim(M) = \min\{ j\in \BN \mid \Ext^j_{\Trp}(M, \Trp) \neq 0 \} \,.
\ee
He also proved that $d(M) \ge r$ if $M$ is  non-zero and does not have
monomial torsion. Here a monomial torsion is a monomial  $P$ in  $\Trp$
such that $Px=0$ for a certain non-zero $x\in M$. It is easy to see that 
$M$ embeds in the $\Tr$-module $\Tr\otimes_{\Trp} M$ if and only if $M$ 
has no monomial torsion. Of course if $M=0$ then $d(M)=0$.

\begin{definition}
\rm{(a)}
A $\Trp$-module $M$ is $q$-holonomic if $M$ is
finitely generated, does not have monomial torsion, and $d(M)\le r$.
\newline
\rm{(b)}
An element $f\in M$, where $M$ is a $\Trp$-module not necessarily 
finitely generated, is $q$-holonomic over $\Trp$ if $\Trp\cdot f$ is a 
$q$-holonomic $\Trp$-module.
\end{definition}

\subsection{The case of $\Tr$}
\label{sub.qholomany}

Let $M$ be a non-zero finitely generated left $\cT_r$-module.
Following \cite[Section 2.1]{Sab}, the codimension and dimension of $M$ are
defined in terms of homological algebra by an analog of \eqref{eq.codim}:
\be
\label{eq.codim2}
\codim(M) := \min\{ j\in \BN \mid \Ext^j_{\cT_r}(M, \cT_r) \neq 0 \},
\quad \dim(M) := 2r - \codim(M).
\ee

The key Bernstein inequality (proved by Sabbah \cite[Thm.2.1.1]{Sab}
in the $q$-case) asserts that if $M\neq 0$ is a finitely generated
$\Tr$-module, then $\dim(M) \ge r$. For $M=0$ let $\dim(M)=0$.

\begin{definition}
\rm{(a)}
A $\cT_r$-module $M$ is $q$-holonomic if $M$
is finitely generated and $\dim(M)\le r$.
\newline
\rm{(b)}
An element $f\in M$, where $M$ is a $\Tr$-module not necessarily 
finitely generated, is $q$-holonomic over $\Tr$ if $\Tr\cdot f$ 
is a $q$-holonomic $\Tr$-module.
\end{definition}

Thus a non-zero finitely generated $\Tr$-module is $q$-holonomic if and
only if it is minimal in the complexity measured by the dimension.

Next we compare $q$-holonomic modules over $\Tr$ versus over $\Trp$.
To do so, we use the following proposition of Sabbah~\cite[Cor.2.1.5]{Sab}.

\begin{proposition} 
\label{r.01}
Suppose $N$ is a $\Trp$-module and $M = \Tr \ot_{\Trp} N$.
\begin{itemize}
\item[(a)] 
If $N$ is $q$-holonomic over $\Trp$ then $M$ is $q$-holonomic over $\Tr$.
\item[(b)] 
Suppose $M$ is $q$-holonomic over $\Tr$ then there is a $\Trp$-submodule 
$N'\subset N$ such that $N'$ is $q$-holonomic over $\Trp$ and 
$M = \Tr \ot_{\Trp} N'$.
\end{itemize}
\end{proposition}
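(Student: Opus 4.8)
\section*{Proof plan}

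The unifying tool is that $\Tr$ is the two-sided Ore localization of $\Trp$ at the multiplicative set generated by $\sM_1,\dots,\sM_r,\sL_1,\dots,\sL_r$. Each of these elements is \emph{normal} (it $q$-commutes with every generator of $\Trp$), so this set is a two-sided Ore set of normal elements; consequently $\Tr$ is flat as a left and as a right $\Trp$-module and $\Tr\ot_{\Trp}(-)$ is exact. Since $\Trp$ is Noetherian, any finitely generated $\Trp$-module $N$ has a resolution $P_\bullet\to N$ by finitely generated projectives; applying the exact functor $\Tr\ot_{\Trp}(-)$ produces a finitely generated projective resolution of $M=\Tr\ot_{\Trp}N$, and for finitely generated projective $P$ one has $\operatorname{Hom}_{\Tr}(\Tr\ot_{\Trp}P,\Tr)\cong\operatorname{Hom}_{\Trp}(P,\Trp)\ot_{\Trp}\Tr$ by adjunction. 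Taking cohomology and using exactness of localization yields the base-change isomorphism
\[
\Ext^j_{\Tr}(M,\Tr)\;\cong\;\Ext^j_{\Trp}(N,\Trp)\ot_{\Trp}\Tr,\qquad j\ge 0.
\]

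For part (a), $M$ is finitely generated over $\Tr$ because $N$ is finitely generated over $\Trp$. If $j<\codim(N)$ then $\Ext^j_{\Trp}(N,\Trp)=0$, so the base-change isomorphism gives $\Ext^j_{\Tr}(M,\Tr)=0$; hence $\codim(M)\ge\codim(N)$ and
\[
\dim(M)=2r-\codim(M)\le 2r-\codim(N)=d(N)\le r.
\]
If $N=0$ then $M=0$ and the claim is trivial; otherwise the absence of monomial torsion gives an embedding $N\hookrightarrow M$, so $M\neq0$. Therefore $M$ is $q$-holonomic over $\Tr$.

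For part (b), I would first extract from $N$ a finitely generated submodule that still generates $M$ after localization, and then repair it to be torsion-free. Because $M$ is generated over $\Tr$ by the image of $N$ and is finitely generated, choose $n_1,\dots,n_s\in N$ whose images $1\ot n_i$ generate $M$, and put $N''=\Trp\cdot\{n_1,\dots,n_s\}\subseteq N$; flatness together with the fact that the $1\ot n_i$ generate gives $\Tr\ot_{\Trp}N''=M$. Let $T\subseteq N''$ be its monomial-torsion submodule, which is finitely generated; since monomials pairwise commute up to units, a product of monomials annihilating the generators of $T$ provides a single monomial $t$ with $tT=0$. Using normality ($at\in t\Trp$ for all $a\in\Trp$) one checks that $N':=t\,N''$ is a $\Trp$-submodule of $N$, that it has no monomial torsion (if a monomial $s$ kills $tn$ then the monomial $st$ kills $n$, so $n\in T$ and thus $tn=0$), and that $\Tr\ot_{\Trp}N'=M$ because $t$ is invertible in $\Tr$.

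It remains to bound $d(N')$. As $N'$ is nonzero, finitely generated, and free of monomial torsion, Sabbah's Bernstein inequality over $\Trp$ \cite{Sab} gives $d(N')\ge r$. For the opposite inequality, equip $\Tr$ with the good filtration $G_\bullet$ whose $k$-th piece is the $\cR$-span of the monomials $\sM^\alpha\sL^\beta$ with $\alpha,\beta\in\BZ^r$ of total degree (sum of absolute values of the exponents) at most $k$. Taking a generating set $J$ of $N'$ and regarding it as a set of $\Tr$-generators of $M$, the good $\Tr$-filtration $G_kM=G_k\Tr\cdot J$ contains the good $\Trp$-filtration $\calF_kN'=\calF_k\Trp\cdot J$, since $\calF_k\Trp\subseteq G_k\Tr$; hence $\dim_\cR\calF_kN'\le\dim_\cR G_kM$ for all $k$, so $d(N')\le\dim(M)\le r$. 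Combined with $d(N')\ge r$ this forces $d(N')=r$, and $N'$ is $q$-holonomic over $\Trp$, proving (b). The main obstacle is precisely this last dimension comparison: it relies on identifying the homological dimension $\dim(M)$ of the localized module with the degree of the Hilbert polynomial of a good $\Tr$-filtration (the $\Tr$-analog of the Hilbert-type theorem available for $\Trp$), so that the filtrations of $N'$ and $M$ may be compared. A purely homological argument is blocked here because the base-change isomorphism only shows the low-degree groups $\Ext^j_{\Trp}(N',\Trp)$ become \emph{torsion} (not zero) after localization, and monomial-torsion modules can have small codimension.
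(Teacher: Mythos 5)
A preliminary point: the paper offers no proof of Proposition~\ref{r.01} at all --- it is quoted from Sabbah~\cite[Cor.2.1.5]{Sab}, with the remark that part (b) is contained in Sabbah's proof --- so you are in effect re-proving Sabbah's result, and your attempt has to be measured against his argument. Your part (a) is correct and is the expected route: the monomials $\sM^\al\sL^\beta$ are normal in $\Trp$, so $\Tr$ is a two-sided Ore localization and is flat over $\Trp$; since $\Trp$ is Noetherian, $N$ is finitely presented, and flat base change gives $\Ext^j_{\Tr}(M,\Tr)\cong\Ext^j_{\Trp}(N,\Trp)\ot_{\Trp}\Tr$, whence $\codim(M)\ge\codim(N)$ and, by Sabbah's identity $d(N)=2r-\codim(N)$ quoted around \eqref{eq.codim}, $\dim(M)\le d(N)\le r$; finite generation of $M$ and (when $N\neq 0$) its nonvanishing are as you say. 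The lattice surgery opening part (b) is also sound: the monomial-torsion elements of $N''$ do form a finitely generated submodule $T$ (your normality computations check out), a single monomial $t$ kills $T$, and $N'=tN''\subset N$ is a $\Trp$-submodule without monomial torsion with $\Tr\ot_{\Trp}N'=M$ because $t$ is a unit in $\Tr$; the bound $d(N')\ge r$ is Sabbah's Bernstein inequality over $\Trp$, quoted in Section~\ref{sub.caseWrp}.

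The gap, which you flag yourself, is the upper bound $d(N')\le r$, and it is genuine: without it part (b) is not proven. The filtration comparison $\dim_\cR\calF_k N'\le\dim_\cR G_k M$ is correct but inert, because the paper defines $\dim(M)$ for $\Tr$-modules purely homologically, via Equation~\eqref{eq.codim2}; nothing in the paper or in your proposal identifies $\dim(M)$ with the growth degree of a good filtration $G_\bullet M$ over the localized algebra, and such a Hilbert-type theorem for $\Tr$ is not a formality --- it carries essentially the same content as the statement being proven, and is precisely what the machinery of \cite[Sec.2.1]{Sab} supplies. Your diagnosis of why the purely homological route stalls is also accurate: one would need $\Ext^c_{\Trp}(N',\Trp)\ot_{\Trp}\Tr\neq 0$ for $c=\codim_{\Trp}(N')$, i.e.\ that the first nonvanishing Ext of a monomial-torsion-free module is not itself entirely monomial torsion, and dimension counting cannot exclude this since monomial-torsion modules can have codimension as low as $1$ (e.g.\ $\Trp/\Trp\sM_1$ has $d=2r-1$). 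What closes the hole in Sabbah's treatment is purity/biduality of Auslander type (his Section 1.5): roughly, a $c$-pure submodule of the torsion-free $N'$ embeds into $\Ext^c_{\Trp}(\Ext^c_{\Trp}(N',\Trp),\Trp)$, and if $\Ext^c_{\Trp}(N',\Trp)$ were killed by a monomial then so would be this bidual, contradicting torsion-freeness. Until an argument of this kind (or the Hilbert-type theorem for $\Tr$) is supplied, your proposal establishes (a) but only reduces (b) to its hard kernel.
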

Actually, part (b) of the above proposition is contained in the proof 
of~\cite[Cor.2.1.5]{Sab}.

\begin{proposition}
\label{prop.WWr}
Suppose $f\in M$, where $M$ is a $\Tr$-module. Then $f$ is $q$-holonomic
over $\Tr$ if and only if it is $q$-holonomic over $\Trp$.
\end{proposition}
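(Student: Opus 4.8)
The plan is to work throughout with $N := \Trp\cdot f$, the cyclic $\Trp$-submodule of $M$ generated by $f$, and to compare it with the $\Tr$-module $\Tr\cdot f$. First I would record the two properties of $N$ that come for free. It is finitely generated, being cyclic. And it has no monomial torsion: every monomial of $\Trp$ acts invertibly on the ambient $\Tr$-module $M$, so $Px=0$ with $P$ a monomial and $x\in N\subseteq M$ forces $x=0$. Next I would identify the localization. The natural map $\Tr\ot_{\Trp}N\to M$, $a\ot x\mapsto ax$, is injective (the localization functor $\Tr\ot_{\Trp}(-)$ at the Ore set of monomials is exact, and $\Tr\ot_{\Trp}M\cong M$ since $M$ is already a $\Tr$-module) and has image $\Tr\cdot f$. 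Hence $\Tr\ot_{\Trp}N\cong\Tr\cdot f$. Under this identification, ``$f$ is $q$-holonomic over $\Tr$'' reads as ``$\Tr\ot_{\Trp}N$ is $q$-holonomic over $\Tr$'', and ``$f$ is $q$-holonomic over $\Trp$'' reads as ``$N$ is $q$-holonomic over $\Trp$''.

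The forward implication is then immediate from Proposition~\ref{r.01}(a): if $N$ is $q$-holonomic over $\Trp$, that proposition yields that $\Tr\ot_{\Trp}N\cong\Tr\cdot f$ is $q$-holonomic over $\Tr$.

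For the converse I would feed $\Tr\cdot f\cong\Tr\ot_{\Trp}N$ into Proposition~\ref{r.01}(b), obtaining a $\Trp$-submodule $N'\subseteq N$ that is $q$-holonomic over $\Trp$ and satisfies $\Tr\ot_{\Trp}N'=\Tr\ot_{\Trp}N=\Tr\cdot f$. Since $N$ is finitely generated and monomial-torsion-free, it remains only to show $d(N)\le r$. The key point is that $f\in\Tr\cdot f=\Tr\cdot N'$; writing $\Tr=S^{-1}\Trp$ for the Ore set $S$ of monomials, I can clear a single denominator to find a monomial $s_0$ with $s_0 f\in N'$. Because conjugation by a monomial merely rescales the generators $\sM_j,\sL_j$ by powers of $q$, one has $s_0\Trp=\Trp s_0$, so $s_0 N=s_0\Trp f=\Trp\,s_0 f\subseteq N'$. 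Multiplication by $s_0$ is an $\cR$-linear bijection of $N$ onto $s_0 N$ intertwining the $\Trp$-action through the filtration-preserving automorphism $a\mapsto s_0 a s_0^{-1}$; hence $d(s_0 N)=d(N)$. Finally $s_0 N$ is a $\Trp$-submodule of the $q$-holonomic module $N'$, so by monotonicity of the dimension under submodules $d(N)=d(s_0 N)\le d(N')\le r$. Thus $N=\Trp\cdot f$ is $q$-holonomic over $\Trp$, i.e.\ $f$ is $q$-holonomic over $\Trp$.

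The main obstacle is exactly this last step. Proposition~\ref{r.01}(b) only supplies \emph{some} $q$-holonomic submodule $N'$ with the correct localization, not the specific cyclic module $\Trp\cdot f$, and a naive short-exact-sequence argument fails: the quotient $N/N'$ is monomial torsion and can have dimension strictly larger than $r$, so additivity of the dimension does not close the gap. The device that rescues the argument is the single-denominator trick together with the fact that conjugation by a monomial is a filtration-preserving automorphism of $\Trp$, which plants a dimension-preserving copy $s_0 N$ of $N$ inside the holonomic module $N'$. Along the way I rely on two facts that are routine for this filtered non-commutative setting but not proved in the excerpt: that $\Tr$ is a flat (Ore) localization of $\Trp$ at the monomials, and that $d(\cdot)$ is monotone under passage to $\Trp$-submodules.
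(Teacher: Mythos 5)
Your proof is correct, and while the setup coincides with the paper's (work with $N=\Trp\cdot f$, note it is cyclic and monomial-torsion-free, identify $\Tr\ot_{\Trp}N$ with $\Tr\cdot f$, and get the forward direction from Proposition~\ref{r.01}(a)), your converse finishes by a genuinely different mechanism. The paper takes the $q$-holonomic submodule $N'\subset N$ from Proposition~\ref{r.01}(b), writes $N'$ as $\Trp$-spanned by $p_1f,\dots,p_kf$, expands $f=\sum_i s_ip_if$ with $s_i\in\Tr$, and then proves two closure claims by direct filtration counts: sums of elements $q$-holonomic over $\Trp$ are $q$-holonomic (via $\cF_k(a+b)\subset\cF_k(a)+\cF_k(b)$), and $pa$ is $q$-holonomic over $\Trp$ for any $p\in\Tr$ (via clearing monomial denominators and the estimate $\cF_N p\subset\sum_{\fm\in\fM_l}\fm^{-1}\cF_{N+l}$, giving $\rk_\Zt\cF_N(pa)=O(N^r)$). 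You instead clear a \emph{single} common monomial denominator $s_0$ so that $s_0f\in N'$, use $s_0\Trp=\Trp s_0$ to plant the twisted copy $s_0N=\Trp s_0f$ inside $N'$, observe that conjugation by $s_0$ preserves the filtration degree so $d(s_0N)=d(N)$, and conclude by monotonicity of $d$ under finitely generated submodules. Your finish is more structural: one global denominator-clearing plus an automorphism twist replaces both of the paper's claims and all explicit Hilbert-function counting in the final step (your clearing step is essentially the engine inside the paper's Claim~2, applied once rather than inside a growth estimate). What the paper's route buys is that Claims~1 and~2 are reusable closure statements in their own right. Both arguments rest on the same background facts you correctly flag as standard but unproved in the text --- that $\Tr$ is the (flat) Ore localization of $\Trp$ at the monomials, and that $d$ is monotone under submodules --- and note the paper itself uses the latter implicitly when it deduces from $d(N')=r$ that each $p_if$ is $q$-holonomic over $\Trp$.
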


\begin{proof}
We can assume that $f\neq 0$ and that $M=\cT_r \cdot f$. 
Let $N= \Trp \cdot f\subset M$. Then $N$ is a
$\Trp$-submodule of $M$ without monomial torsion and $M =\Tr\otimes_{\Trp} N$. 
Proposition~\ref{r.01} implies that if $f$ is $q$-holonomic
over $\Trp$, then $f$ is $q$-holonomic over $\Tr$.

We now prove the converse. Assume that $M$  is $q$-holonomic over $\Tr$.
By Proposition~\ref{r.01}, there is a $\Trp$-submodule $N'\subset N$ such 
that $d(N')=r$ and $M= \Tr\otimes_{\Trp} N'$. Since $\Trp$ is Noetherian, 
we can assume $N'$ is $\Trp$-spanned by $p_1 f,\dots,p_k f$, where 
$p_i\in \Trp$.

{\bf Claim 1.} Suppose $a,b$ are elements of a $\Trp$-module and $a,b$ are
$q$-holonomic over $\Trp$. Then $a+b$ is $q$-holonomic over $\Trp$.
\newline
{\em Proof of claim 1.} Since $\cF_k(a+b) \subset \cF_k(a) + \cF_k(b)$, we have
$$
\rk_\Zt (\cF_k(a+b)) \le  \rk_\Zt (\cF_k(a)) +  \rk_\Zt (\cF_k(b)) =O(n^r) \,,
$$
which shows that $d(\Trp (a+b)) \le r$, and hence $a+b$ is $q$-holonomic
over $\Trp$.

{\bf Claim 2.} Suppose $a \in M$ is $q$-holonomic over $\Trp$, then $pa$
is $q$-holonomic over $\Trp$ for any $p\in \Tr$.
\newline
{\em Proof of claim 2.} Let $\fM_l\subset \Trp$ be the set of all monomials
$\sM^\al \sL^\beta$ with total degree $\le  l$. There is a monomial
$\mathfrak m$ such that $\fm p \in \Trp$. Choose $l$ such that
$\fm \in \fM_l$ and $\fm p \in \cF_l$. Then for all positive integers
$N$, $\cF_N p \subset \sum_{\fm \in \fM_l} \fm^{-1} \cF_{N+l}$. Hence
$$
\rk_\Zt (\cF_N(p a)) \le \sum_{\fm \in \fM_l} O((N+l)^r) = O(n^r) \,,
$$
which proves that $p a$ is $q$-holonomic over $\Trp$.

Let us return to the proof of the proposition. Since $d(N')=r$, each of
$p_1 f, \dots, p_k f$ is $q$-holonomic over $\Trp$. Because
$M= \Tr\otimes_{\Trp} N'$, there are $s_1,\dots,s_k \in \Tr$ such that
$f = \sum_{i=1}^k s_i p_i f$. Claims 1 and 2 show that $f$ is
$q$-holonomic over $\Trp$.
\end{proof}

\begin{remark}
When $r=1$, the above definition of $q$-holonomicity is equivalent
to the one given in Section~\ref{sub.Zextension}.
\end{remark}


\section{Properties of $q$-holonomic modules}
\label{sec.modules}

The class of $q$-holonomic $\Tr$-modules is closed under several natural
operations. We will collect these operations here, and refer to
Sabbah's paper for complete proofs. Below $q$-holonomic means 
$q$-holonomic over $\Tr$.

\subsection{Sub-quotients and extensions} 
\label{sub.subquotient}

By~\cite[Cor 2.1.6]{Sab}, we have the following.

\begin{proposition}
\label{r.subquot}
\rm{(a)} 
Submodules and quotient modules of $q$-holonomic modules are $q$-holonomic.
\newline
\rm{(b)}
Extensions of $q$-holonomic modules by $q$-holonomic modules
are $q$-holonomic.
\end{proposition}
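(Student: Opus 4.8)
The plan is to read off Proposition~\ref{r.subquot} directly from the homological definition of dimension in Equation~\eqref{eq.codim2}, using the long exact sequence of $\Ext$ groups together with the Bernstein inequality $\dim(M) \ge r$ for nonzero $M$. The entire statement is really a statement about how the invariant $j(M) := \codim(M) = \min\{ j \mid \Ext^j_{\cT_r}(M,\cT_r) \neq 0 \}$ behaves under short exact sequences, combined with the observation that a finitely generated $\cT_r$-module $M$ is $q$-holonomic if and only if either $M = 0$ or $\codim(M) \ge r$ (since $\dim(M) = 2r - \codim(M) \le r$). So I would first restate the goal purely in terms of $\codim$: I must show that in a short exact sequence $0 \to M' \to M \to M'' \to 0$ of finitely generated $\cT_r$-modules, we have $\codim(M) = \min(\codim(M'), \codim(M''))$; part (a) follows by taking one factor and part (b) by the same identity.

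The main tool is the long exact sequence in $\Ext$. First I would note that each term in the short exact sequence is finitely generated over the Noetherian ring $\cT_r$, so all the $\Ext^j_{\cT_r}(-,\cT_r)$ are defined and the functor is contravariant, turning $0 \to M' \to M \to M'' \to 0$ into the long exact sequence
\be
\cdots \to \Ext^{j-1}_{\cT_r}(M',\cT_r) \to \Ext^{j}_{\cT_r}(M'',\cT_r) \to \Ext^{j}_{\cT_r}(M,\cT_r) \to \Ext^{j}_{\cT_r}(M',\cT_r) \to \cdots
\ee
Next I would argue that for $j < \min(\codim(M'), \codim(M''))$ the two outer terms $\Ext^j(M',\cT_r)$ and $\Ext^j(M'',\cT_r)$ vanish by definition of codimension, forcing $\Ext^j(M,\cT_r) = 0$; hence $\codim(M) \ge \min(\codim(M'),\codim(M''))$. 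For the reverse inequality, let $c = \min(\codim(M'),\codim(M''))$ and examine degree $c$: one of the two outer terms is nonzero and the sequence relates it to $\Ext^c(M,\cT_r)$, so a small case analysis on which of $M',M''$ achieves the minimum (and an adjacent-degree vanishing argument) shows $\Ext^c(M,\cT_r) \neq 0$, giving $\codim(M) \le c$. Combining, $\codim(M) = c$, which is exactly the identity sought.

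From the codimension identity the two parts fall out immediately. For part (a), if $M$ is $q$-holonomic and nonzero, then $\codim(M) \ge r$; a nonzero submodule $M'$ and nonzero quotient $M''$ then satisfy $\codim(M') \ge \codim(M) \ge r$ and likewise for $M''$ (since the minimum of the two codimensions equals $\codim(M)$, each is at least $\codim(M)$), so each is $q$-holonomic, and the zero case is trivial. For part (b), if $M'$ and $M''$ are both $q$-holonomic then $\codim(M') \ge r$ and $\codim(M'') \ge r$ (treating the zero modules separately, where the claim is vacuous), so $\codim(M) = \min(\codim(M'),\codim(M'')) \ge r$ and $M$ is $q$-holonomic.

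I expect the main obstacle to be the reverse inequality $\codim(M) \le \min(\codim(M'),\codim(M''))$, since the long exact sequence only guarantees the nonvanishing of one outer $\Ext$ term in degree $c$, and I must rule out the possibility that it dies into or is absorbed by an adjacent nonzero term before reaching $\Ext^c(M,\cT_r)$. The clean way to handle this is to invoke the general principle that $j(-)$ is an exact-sequence invariant for a Noetherian ring in which a suitable duality or Auslander condition holds, which is precisely the content of the cited \cite[Cor 2.1.6]{Sab}; rather than reprove Sabbah's spectral-sequence machinery I would cite it for the codimension identity and spend the proof only on deducing parts (a) and (b). The honest subtlety is therefore that the full strength of Proposition~\ref{r.subquot} rests on the homological regularity of $\cT_r$ established by Sabbah, and my contribution is to package the long exact sequence argument cleanly; the routine case-checking on zero modules I would leave implicit.
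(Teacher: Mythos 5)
Your proposal is correct, but it is worth saying how it sits relative to the paper: the paper offers no argument at all for this proposition --- it is quoted verbatim from Sabbah as \cite[Cor 2.1.6]{Sab} --- whereas you partially reprove it, reducing everything to the grade identity $\codim(M)=\min(\codim(M'),\codim(M''))$ for a short exact sequence $0\to M'\to M\to M''\to 0$. Your long-exact-sequence analysis is sound and in fact exposes a useful asymmetry you could make explicit: part (b) needs only the \emph{easy} inequality $\codim(M)\ge\min(\codim(M'),\codim(M''))$, which your vanishing argument in degrees $j<\min$ proves outright, so extensions are handled with no input beyond the LES; it is only part (a) that requires the reverse inequality. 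You are also right that the reverse inequality cannot be extracted from the LES plus the Bernstein inequality alone: in the case $\codim(M')=c<\codim(M'')$ the sequence $0\to\Ext^c(M,\cT_r)\to\Ext^c(M',\cT_r)\to\Ext^{c+1}(M'',\cT_r)$ permits $\Ext^c(M,\cT_r)=0$, and Bernstein is of no help here since it bounds codimension from \emph{above} ($\codim\le r$ for nonzero finitely generated modules), while (a) demands a lower bound on $\codim(M')$ and $\codim(M'')$. The needed input is the Auslander-type regularity of $\cT_r$. One caveat: citing \cite[Cor 2.1.6]{Sab} for the grade identity is essentially citing the statement being proved (that corollary is exactly what the paper quotes as this proposition), so your proof is then coextensive with the paper's bare citation; to make your packaging non-circular you should instead invoke Sabbah's underlying homological results (the regularity and duality theory of Sections 1.5 and 2.1 of \cite{Sab}), or, alternatively, note that over $\Trp$ the identity $d(M)=\max(d(M'),d(M''))$ follows elementarily from additivity of Hilbert functions for good filtrations, after which Propositions \ref{r.01} and \ref{prop.WWr} transfer the conclusion to $\Tr$.
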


\no{
\subsection{Restriction}
\label{sub.restriction}

Next we discuss the restriction of a $\Tr$-module $M$ with support in
$n_r-a=0$ to a $\cW_{r-1}$-module, following~\cite[Sec.1.4]{Sab}. By 
support in $n_r-a=0$, we mean that every element $m$ of $M$ is annihilated by
a product $\prod_{j \in \BZ}(n_r - q^j a)^{\gamma_j}$ where $\gamma_j \in \BN$ is
nonzero for finitely many $j$. When this is satisfied, we can
define $M_0=\cup_{p \in \BN} \mathrm{Ker}((n_1-a)^p)$. 
Sabbah in~\cite[Prop.1.4.1]{Sab} 
constructs a $\cW_{r-1}$-module structure on $M_0$ and shows that
\begin{equation} 
\label{eq.MM0}
M \simeq \cR[\sM_r^{\pm 1}] \otimes_{\cR} M_0
\end{equation}

The remark at the end of~\cite[Sec.2.1]{Sab} implies the following.

\begin{proposition}
\label{r.restriction}
With the above notation, $M$ is a $q$-holonomic $\Tr$-module if and only
if $M_0$ is a $q$-holonomic $\cW_{r-1}$-module.
\end{proposition}
}

\subsection{Push-forward} 
\label{sub.pushforward}

Recall that $\sM^\al= \sM_1^{\al_1} \dots \sM_r^{\al_r}$ for 
$\al=(\al_1,\dots,\al_r)\in \BZ^r$. Suppose $A$ is an $r\times s$ matrix
with integer entries. Let $\sM=(\sM_1,\dots,\sM_r)$ and
$\sM'=(\sM'_1,\dots,\sM'_s)$. There is an $\cR$-linear  map
$$
\cR[\sM^{\pm 1}] \to \cR[\sM'^{\pm 1}], \qquad
\sM^{\al} \mapsto (\sM')^{A^T \al}
$$
where $A^T$ is the transpose of $A$. If $M$ is a $\cT_r$-module, define
$$
(T_A)_*(M) = \cR[\sM'^{\pm 1}] \otimes_{\cR[\sM^{\pm 1}]} M \,,
$$
which is a $\cT_s$-module via the following action:
$$
\sM'_i (P \otimes m) = (\sM'_i P) \otimes m, \qquad
\sL'_i (P \otimes m) = \tau_i(P) \otimes \sL_1^{A_{1,i}} \dots \sL_r^{A_{r,i}} m 
\,,
$$
where $\tau_i: \cR[\sM'^{\pm 1}]\to \cR[\sM'^{\pm 1}]$ is the $\cR$-algebra 
map given by $\tau_i(\sM'_j)= q^{\delta_{i,j}} \sM'_j$.
In~\cite[Prop.2.3.3]{Sab} Sabbah proves:

\begin{proposition}
\label{r.pushforward}
If $M$ is a $q$-holonomic $\cT_r$-module and $A$ is an $s\times r$ matrix 
with integer entries, then $(T_A)_*(M)$ is a $q$-holonomic $\cT_s$-module.
\end{proposition}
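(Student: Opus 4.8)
The plan is to reduce the general statement to a short list of elementary matrices by exploiting functoriality of the push-forward, and then to treat each elementary case using the closure properties already recorded (Proposition~\ref{r.subquot}) together with Sabbah's dimension theory. First I would record that $A \mapsto (T_A)_*$ is functorial, in the sense that $(T_A)_* \circ (T_B)_* \cong (T_{BA})_*$ and $(T_{\id})_* = \id$. This is a direct check from the definition: composing the scalar-extension maps $\sM^\al \mapsto (\sM')^{B^T\al}$ and $(\sM')^\gamma \mapsto (\sM'')^{A^T\gamma}$ yields $\sM^\al \mapsto (\sM'')^{(BA)^T\al}$, and the twisted $\sL$-actions compose compatibly. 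Using the Smith normal form I then write $A = U D V$ with $U \in \GL_r(\BZ)$ and $V \in \GL_s(\BZ)$ invertible and $D$ rectangular-diagonal, so that $(T_A)_* \cong (T_V)_* \circ (T_D)_* \circ (T_U)_*$. It therefore suffices to prove the proposition when $A$ is invertible and when $A$ is rectangular-diagonal.

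For the invertible factors $U$ and $V$, the map $\sM^\al \mapsto (\sM')^{A^T\al}$ is an isomorphism of Laurent rings and $(T_A)_*$ is induced by an algebra automorphism of $\cT_r$; hence it is an equivalence of module categories preserving finite generation and the dimension $\dim(\cdot)$, so it preserves holonomicity. For a diagonal $D$ I would split it further, using $\cT_s \cong \cT^{\otimes s}$, into one-variable building blocks of two easy kinds. \emph{Nonzero scaling} $\sM \mapsto (\sM')^{d}$ with $d\neq 0$: here $\cR[\sM'^{\pm1}]$ is free of rank $|d|$ over its subring $\cR[(\sM')^{\pm d}]$, so $(T_D)_*M$ is, as a filtered $\cR$-space, a bounded number of twisted copies of $M$; its Hilbert function grows like $|d|$ times that of $M$, the degree is unchanged, and holonomicity follows by realizing it as an iterated extension and applying Proposition~\ref{r.subquot}(b). \emph{Adding a free variable} (a zero column of $D$): this tensors $M$ externally with the $\cT_1$-module $\cR[\sM'^{\pm1}]$ on which $\sL'$ acts by the $q$-shift $\tau$; the latter is cyclic, annihilated by $\sL'-1$, and holonomic of dimension $1$. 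Since $\cT_s \cong \Tr \otimes_{\cR} \cT_{s-r}$ and the Hilbert-polynomial degrees of an external tensor product add, the result has dimension $s$ and is holonomic.

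The remaining and genuinely hard case is \emph{contraction} of a variable, corresponding to a zero column of $A^T$, i.e. to the specialization $\sM_i \mapsto 1$. Here the defining relation of the tensor product forces $1 \otimes \sM_i m = 1 \otimes m$, so $(T_D)_*M$ is identified with the quotient $M/(\sM_i-1)M$ (the restriction of $M$ to the locus $n_i=0$), now regarded as a module over the Weyl algebra obtained by deleting the $i$-th pair of generators. The naive filtration estimate is useless here: the image of $\calF_k M$ still has $\cR$-dimension of order $k^r$, whereas holonomicity over $\cT_{r-1}$ demands order $k^{r-1}$. The whole point is that, for a \emph{holonomic} $M$, restriction drops the Hilbert degree by exactly one, and that the quotient stays finitely generated even though the operator $\sL_i$ is no longer available. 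This is precisely the $q$-difference analogue of Kashiwara's restriction theorem, and it is here that Sabbah's good-filtration analysis of the associated graded module, together with the Bernstein inequality $\dim\ge r-1$, must be invoked; I expect this to be the main obstacle, with all other steps being formal. Once the contraction case is settled, finite generation being preserved automatically through the scaling and free-variable steps, the three reductions assemble via functoriality and the Smith normal form into the full statement.
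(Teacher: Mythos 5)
Your formal skeleton is sound, and it is indeed how one would naturally organize a from-scratch proof: the functoriality $(T_A)_*\circ(T_B)_*\cong(T_{BA})_*$ is a direct check, Smith normal form reduces everything to unimodular, scaling, free-variable and contraction factors, and the unimodular case is precisely the automorphism $X=\bigl(\begin{smallmatrix} A & 0\\ 0 & (A^T)^{-1}\end{smallmatrix}\bigr)$ already covered by Proposition~\ref{prop.symplectic}. One local flaw in the easy cases: for the scaling $\sM\mapsto(\sM')^{d}$, your ``iterated extension'' justification does not work as stated, because the $|d|$ summands $\sM'^{\,j}\otimes M$ are stable under $\sL'$ but are permuted cyclically by $\sM'$ (with a wrap-around twist $\sM'^{\,|d|}\otimes m = 1\otimes \sM^{\pm1}m$), so they are not the layers of an invariant filtration and Proposition~\ref{r.subquot}(b) does not apply to them. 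Your Hilbert-growth estimate ($\dim_\cR\calF_N$ at most $|d|$ times a shifted Hilbert function of $M$, so still $O(N^r)$) is the correct argument and suffices, via Proposition~\ref{prop.WWr} and the filtration characterization of $d(\cdot)$ over $\Trp$, once finite generation is checked.

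The genuine gap is exactly where you flag it: the contraction $\sM_i\mapsto 1$, i.e.\ showing that $M/(\sM_i-1)M$ is $q$-holonomic over $\cT_{r-1}$ whenever $M$ is $q$-holonomic over $\cT_r$. After the Smith-normal-form reduction this quotient carries the \emph{entire} non-formal content of Proposition~\ref{r.pushforward}, and you do not prove it: you correctly observe that the naive filtration estimate gives $O(k^r)$ instead of the required $O(k^{r-1})$, identify the statement as a $q$-analogue of Kashiwara's restriction theorem, and then declare that Sabbah's good-filtration analysis ``must be invoked.'' That is a pointer to the literature, not a proof --- and it points to the same place the paper itself does, since the survey offers no argument for this proposition at all and quotes it verbatim from \cite[Prop.~2.3.3]{Sab}. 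Be warned, moreover, that the restriction-type statement in Sabbah's Section~1.4 concerns modules \emph{supported} on a hypersurface $n_i=a$, a hypothesis a general $q$-holonomic $M$ does not satisfy, so the contraction case cannot be closed by an off-the-shelf citation either; one must reproduce Sabbah's actual direct-image argument (or prove the $q$-Kashiwara statement directly). In short: right reductions, right identification of the crux, but the crux is left open, so the proposal is an organized reduction of the theorem to its hardest case rather than a proof of it.
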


\subsection{Symplectic automorphism}
\label{sub.symplectic}

Next we discuss a symplectic automorphism group of the quantum Weyl
algebra.
Suppose $A,B,C,D$ are $r\times r$ matrices with integer entries and 
$$
X=\begin{pmatrix}
A & B \\C & D
\end{pmatrix}.
$$
Define an $\cR$-linear map $\phi_X: \Tr \to \Tr$ by
$$ 
\phi_X(\sM^\al \sL^\beta)= \sM^{A \al} \sL^{B \beta} \sM^{C \al} \sL^{D \beta} \,.
$$
Then $\phi_X$ is an $\cR$-algebra automorphism if and only if $X$ is a 
symplectic matrix.

Suppose $X$ is symplectic and $M$ is a $\Tr$-module. Then $\phi_X$ induces 
another $\Tr$-module structure on $M$, where the new action 
$u \cdot_{\phi_X} x \in M$ for 
$u\in \Tr$ and $x\in M$ is defined by
$$ 
u \cdot_{\phi_X} x = \phi_X(u) \cdot x \,.
$$
This new $\Tr$-module is denoted by $(\phi_X)^*(M)$. 

\begin{proposition}
\label{prop.symplectic}
$M$ is $q$-holonomic if and only if $(\phi_X)^*(M)$ is $q$-holonomic.
\end{proposition}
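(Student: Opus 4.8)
The plan is to prove the statement by showing that the dimension of a finitely generated $\Tr$-module is invariant under the symplectic automorphism $\phi_X$. Since $q$-holonomicity over $\Tr$ is defined purely in terms of $\dim(M)\le r$ (together with finite generation), it suffices to verify two things: first, that $(\phi_X)^*(M)$ is finitely generated whenever $M$ is, and second, that $\dim((\phi_X)^*(M)) = \dim(M)$. The first point is immediate, because $\phi_X$ is an $\cR$-algebra automorphism of $\Tr$, so a generating set for $M$ as a $\Tr$-module is also a generating set for $(\phi_X)^*(M)$; indeed the underlying $\cR$-module is literally the same, only the action is twisted. The symmetry of the statement (using $X^{-1}$, which is again symplectic, to pass back) means we only need one direction of the dimension comparison.

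For the second point, I would work directly from the homological definition in~\eqref{eq.codim2}, namely $\dim(M) = 2r - \codim(M)$ with $\codim(M) = \min\{j \mid \Ext^j_{\Tr}(M,\Tr)\neq 0\}$. The key observation is that the functor $M \mapsto (\phi_X)^*(M)$ is an equivalence of the category of left $\Tr$-modules with itself: it is the restriction-of-scalars functor along the ring automorphism $\phi_X$, with inverse given by $\phi_{X^{-1}} = \phi_X^{-1}$. Such a functor is exact and sends $\Tr$ (viewed as a left module over itself) to $\Tr$ again, up to the analogous twist on the right $\Tr$-module structure used to compute $\Ext$. Consequently it carries projective resolutions to projective resolutions and induces isomorphisms
$$
\Ext^j_{\Tr}(M,\Tr) \;\cong\; \Ext^j_{\Tr}\bigl((\phi_X)^*(M),\,(\phi_X)^*(\Tr)\bigr)
$$
for every $j$. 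Since $\phi_X$ is an automorphism, $(\phi_X)^*(\Tr)\cong \Tr$ as a $\Tr$-bimodule in the relevant sense, so the right-hand $\Ext$ groups are exactly the ones computing $\codim((\phi_X)^*(M))$. Hence the two codimensions agree, and therefore so do the two dimensions.

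The main obstacle I expect is bookkeeping the bimodule structures correctly, since $\Ext^j_{\Tr}(M,\Tr)$ uses the left $\Tr$-module structure of the first argument to take the resolution and the right $\Tr$-module structure of $\Tr$ to land the result in a $\Tr$-module. One must check that twisting the left action by $\phi_X$ can be matched by twisting the right action by $\phi_X$ on the coefficient module, so that the resulting $\Ext$ groups vanish in exactly the same degrees. The cleanest way to handle this is to note that $\phi_X$, being an $\cR$-algebra automorphism of the domain $\Tr$, induces a self-equivalence of the derived category that fixes the free module $\Tr$ up to isomorphism, from which the equality of the minimal nonvanishing degree of $\Ext^\bullet(-,\Tr)$ is formal. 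Once this is set up, the conclusion $\dim((\phi_X)^*(M)) = \dim(M) \le r$ follows, and the finite-generation hypothesis is preserved as noted above, completing both directions.
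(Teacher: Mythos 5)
Your proposal is correct and takes essentially the same route as the paper: the paper's (one-sentence) proof likewise observes that the $\Ext$ groups of $(\phi_X)^*(M)$ are isomorphic to those of $M$ and that codimension and dimension are defined via the $\Ext$ groups alone, as in Equation~\eqref{eq.codim2}. Your write-up merely spells out the restriction-of-scalars equivalence and the bimodule bookkeeping that the paper leaves implicit.
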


\begin{proof}
This follows easily from the fact that the ext groups of $(\phi_X)^*(M)$
are isomorphic to those of $M$, and the fact that codimension and dimension
can be defined using the ext groups alone; see Equation~\eqref{eq.codim2}.
\end{proof}

In particular, when
\begin{equation}
\label{eq.Four}X= \begin{pmatrix}
0 & I  \\ -I  & 0
\end{pmatrix}
\end{equation}
then $(\phi_X)^*(M)$ is called the Mellin transform of $M$, and is 
denoted by $\fM(M)$, following~\cite[Sec.2.3]{Sab}. In particular,

\begin{corollary}
\label{cor.mellin}
If $M$ is a $q$-holonomic $\cT_r$-module, so is $\mathfrak{M}(M)$.
\end{corollary}
 
\no{
 For $1\le i \le r$ let $A(i)$ be the $i$-th column of $A$. 
Define
$$
M'= M^{A} L^B, L'= M^C L^D.
$$
Then the map $\phi(M,L)=(M',L')$ is an algebra automorphism of $\Tr$
if and only if the matrix
$$
X=\begin{pmatrix}
A & B \\C & D
\end{pmatrix}
$$
is symplectic. We will call such $\phi$ symplectic automorphisms
of $\Tr$. Given such an automorphism $\phi$, and a $cT_r$-module
$M$, let $\phi(M)$ denote the set $M$ with $cT_r$-module action
$(P,f) \in cT_r \times M \mapsto \phi(P)f$. In \cite[Sec.1.3]{Sab}
Sabbah proves:

If $M$ is a $q$-holonomic $\cT_r$-module and $\phi$ is a symplectic
automorphism of $\cT_r$, then $\phi(M)$ is $q$-holonomic too.
}

Another interesting case is when
$$
X=\begin{pmatrix}
A & 0 \\0 & (A^T)^{-1}
\end{pmatrix}
$$
where $A \in \GL(r,\BZ)$. 

\subsection{Tensor product}
\label{sub.tensor}

Suppose $M,M'$ are $\Tr$-modules.
One defines their box product $M \boxtimes M'$ and their convoluted 
box product $M \hbt M'$, which are  $\Tr$-modules, as follows. As an 
$\cR[\sM_1^{\pm1}, \dots, \sM_r^{\pm1}]$-module,
$$
M \boxtimes M'=M\ot_{\cR[\sM_1^{\pm1}, \dots, \sM_r^{\pm1}]} M'\,,
$$
and the $\Tr$-module structure is given by
\be \label{eq.1a1}
\sM_i(x \otimes x')= \sM_i(x) \otimes x' = x \otimes \sM_i(x'), \qquad
\sL_i(x \otimes x')= \sL_i(x) \otimes \sL_i(x') \,.
\ee

Similarly, as $\cR[\sL_1^{\pm1}, \dots, \sL_r^{\pm1}]$-module,
$$
M \hbt M'=M\ot_{\cR[\sL_1^{\pm1}, \dots, \sL_r^{\pm1}]} M'\,,
$$
and the $\Tr$-module structure is given by
\be
\label{eq.1b1}
\sL_i(x \otimes x')= \sL_i(x) \otimes x' = x \otimes \sL_i(x'), \qquad
\sM_i(x \otimes x')= \sM_i(x) \otimes \sM_i(x') \,.
\ee

\begin{proposition} 
\label{r.tensor}
Suppose $M,M'$ are $q$-holonomic $\Tr$-modules. Then both 
$M \boxtimes M'$ and $M \hbt M'$ are $q$-holonomic.
\end{proposition}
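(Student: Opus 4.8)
The plan is to reduce the two assertions to one via the Mellin transform, and then to realize the box product as a pushforward of an external tensor product of $M$ and $M'$. First I would dispose of $\hbt$ in terms of $\boxtimes$. The Mellin transform $\fM$ of Section~\ref{sub.symplectic}, attached to the matrix~\eqref{eq.Four}, interchanges the roles of the operators $\sM_i$ and $\sL_i$ (up to inversion). Since this relabelling exchanges $\cR[\sM_1^{\pm1},\dots,\sM_r^{\pm1}]$ with $\cR[\sL_1^{\pm1},\dots,\sL_r^{\pm1}]$ and the diagonal $\sL$-action with the diagonal $\sM$-action, $\fM$ carries the structure~\eqref{eq.1a1} into~\eqref{eq.1b1}; that is, $\fM(M\boxtimes M')\cong \fM(M)\hbt\fM(M')$. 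As $\fM$ is an invertible operation preserving $q$-holonomicity by Corollary~\ref{cor.mellin}, it suffices to prove that $M\boxtimes M'$ is $q$-holonomic whenever $M,M'$ are; applying this to $\fM(M),\fM(M')$ and transporting back then yields the claim for $\hbt$.

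Next I would form the external tensor product. Writing $\cT_{2r}\cong\cT_r\otimes_\cR\cT_r$, I regard $M\otimes_\cR M'$ as a $\cT_{2r}$-module, with $\sM_1,\dots,\sM_r,\sL_1,\dots,\sL_r$ acting on the left factor and $\sM_{r+1},\dots,\sM_{2r},\sL_{r+1},\dots,\sL_{2r}$ on the right. I claim it is $q$-holonomic over $\cT_{2r}$. To see this I pass to the $\Trp$-picture via Proposition~\ref{r.01}, choosing $q$-holonomic $\cT_{r,+}$-modules $N,N'$ with $M=\Tr\otimes_{\Trp}N$ and $M'=\Tr\otimes_{\Trp}N'$, and I estimate the Hilbert dimension of $N\otimes_\cR N'$ over $\cT_{2r,+}$ by the same kind of filtration estimate used in Claims~1 and~2 of Proposition~\ref{prop.WWr}. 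Since a monomial of $\cT_{2r,+}$ of total degree $\le k$ factors as a product of monomials of degrees $i$ and $j$ with $i+j\le k$ acting on the two factors, one has $\calF_k(N\otimes_\cR N')\subseteq\calF_kN\otimes_\cR\calF_kN'$, whence $\dim_\cR\calF_k(N\otimes_\cR N')\le(\dim_\cR\calF_kN)(\dim_\cR\calF_kN')=O(k^{2r})$. Thus $d(N\otimes_\cR N')\le 2r$, so $N\otimes_\cR N'$ is $q$-holonomic over $\cT_{2r,+}$, and Proposition~\ref{r.01} returns a $q$-holonomic $\cT_{2r}$-module, namely $M\otimes_\cR M'$.

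Finally I would recover $M\boxtimes M'$ as a pushforward. Let $A=\begin{pmatrix}I_r\\ I_r\end{pmatrix}$ be the $2r\times r$ integer matrix identifying the two groups of $\sM$-variables along the diagonal. Checking against the definition preceding Proposition~\ref{r.pushforward}, the $\cT_r$-module $(T_A)_*(M\otimes_\cR M')$ has its operator $\sM'_i$ acting as the common image of $\sM_i$ and $\sM_{r+i}$, while $\sL'_i$ acts through the $i$-th column of $A$, i.e.\ as $\sL_i\sL_{r+i}=\sL_i\otimes\sL_i$. Comparing with~\eqref{eq.1a1} identifies $(T_A)_*(M\otimes_\cR M')\cong M\boxtimes M'$, and Proposition~\ref{r.pushforward} shows it is $q$-holonomic over $\cT_r$. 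Together with the first paragraph, this proves the proposition.

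The step I expect to be the main obstacle is the holonomicity of the external product over $\cT_{2r}$: one must set up the $2r$-variable filtration carefully, justify the subadditivity bound $\dim_\cR\calF_k(M\otimes_\cR M')\le(\dim_\cR\calF_kM)(\dim_\cR\calF_kM')$, and control the passage between the $\cT_{r,+}$ and $\cT_r$ versions, in particular verifying that $N\otimes_\cR N'$ inherits the absence of monomial torsion needed to apply Proposition~\ref{r.01}. The pushforward bookkeeping, matching the columns of $A$ with the diagonal $\sL$-action of~\eqref{eq.1a1}, is routine but must be carried out with care so as to land on $\boxtimes$ and not on a twisted variant.
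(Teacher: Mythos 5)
Your proof is correct, and it splits the same way the paper's does into the two halves, but the halves are treated asymmetrically relative to the paper. The reduction of $\hbt$ to $\boxtimes$ via the Mellin transform is exactly the paper's step: the published proof invokes the identity $M \hbt M'= \fM^{-1}(\fM(M) \boxtimes \fM(M'))$ together with Corollary~\ref{cor.mellin}, precisely as you do. The genuine difference is in the $\boxtimes$ half: the paper disposes of it in one line by citing Sabbah's Proposition~2.4.1 of~\cite{Sab}, whereas you reconstruct a proof from the paper's own toolkit --- external product over $\cT_{2r}$, a Hilbert-dimension estimate in the $\cT_{2r,+}$ picture via Proposition~\ref{r.01}, and the diagonal pushforward $(T_A)_*$ with $A=\left(\begin{smallmatrix} I_r\\ I_r\end{smallmatrix}\right)$ handled by Proposition~\ref{r.pushforward}. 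This is in effect an unfolding of the argument behind the cited result, and the points you flag as delicate do all go through: $N\otimes_\cR N'$ is finitely generated over $\cT_{2r,+}\cong \Trp\otimes_\cR\Trp$ by tensor products of generators; every monomial of $\cT_{2r,+}$ factors exactly as $u\otimes v$ with $u,v$ monomials of $\Trp$ (variables from the two factors commute on the nose), and since the modules $N,N'$ furnished by Proposition~\ref{r.01}(b) are $q$-holonomic, hence without monomial torsion, each of $u,v$ acts injectively, and a tensor product over the field $\cR$ of injective maps is injective, so $N\otimes_\cR N'$ has no monomial torsion; and your identification $(T_A)_*(M\otimes_\cR M')\cong M\boxtimes M'$ is the standard isomorphism $(M\otimes_\cR M')\otimes_{B\otimes_\cR B} B \cong M\otimes_B M'$ for $B=\cR[\sM_1^{\pm1},\dots,\sM_r^{\pm1}]$, with the column action $\sL'_i \mapsto \sL_i\sL_{r+i}$ matching the diagonal $\sL$-action of~\eqref{eq.1a1}. (One bookkeeping note in your favor: the construction in Section~\ref{sub.pushforward} takes $A$ of size source-by-target, so your $2r\times r$ matrix is the correct shape despite the ``$s\times r$'' in the statement of Proposition~\ref{r.pushforward}.) What your route buys is self-containedness --- the only imported inputs are Propositions~\ref{r.01} and~\ref{r.pushforward}, which the paper relies on anyway --- at the cost of redoing, in about a page, material the paper deliberately delegates to~\cite{Sab}.
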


\begin{proof}
The case of $M \boxtimes M'$ is a special case 
of~\cite[Proposition 2.4.1]{Sab}, while the case of $M \hbt M'$ follows 
from the case of the box  product via the Mellin transform, since 
$$
M \hbt M'= \fM^{-1}(\fM(M) \boxtimes \fM(M')) \,.
$$
\end{proof}

\no{

\subsection{Rescaling $q$}
\label{sub.rescaling} 

In this section we show that the class of $q$-holonomic modules is 
invariant under rescaling the variable $q$. 
Recall that $\cR=\bk(q)$ and every $\Tr$-module $M$ is an $\cR$-module.
Fix a nonzero integer $c$ and let $\sigma$ denote the isomorphism of
fields given by 
\be
\label{eq.sigma}
\sigma: \bk(q) \to \bk(q^c), \qquad a(q) \mapsto a(q^c) \,.
\ee
Suppose that $M$ is a $\bk(q)$-module, $M_c$ is a $\bk(q^c)$-module and 
$\sigma: M \to M_c$ is a $\bk$-linear isomorphism. In addition, we assume that
$M$ and $M_c$ are $\Tr$-modules 

If $M$ is a $\cR$-module, let $\sigma_c M$ denote the $\cR$-module induced by 
$\sigma_c: \cR \to \cR$. Hence, if $M$ is a $\Tr$-module, $\sigma_c M$ 
inherits a $\Tr$-module structure too.

\begin{proposition}
\label{prop.rescale}
If $M$ is a $q$-holonomic $\Tr$-module and $c$ is a nonzero integer, then
$\sigma_c M$ is a $q$-holonomic $\Tr$-module.
\end{proposition}

\begin{proof}
Without loss of generality, we can assume $c>0$. 
Using Proposition \ref{r.01}, it suffices to show that if $M$ is a 
$q$-holonomic $\Trp$-module, so is $\sigma_c M$. 
If $P \in \Trp$, $m \in M$, we have:
$$
\sigma_c( P m) = \sigma_c(P) \sigma_c(m)
$$
where
$$
\sigma_c(a(q) \sL^\alpha \sM^\beta) = a(q^b) \sL^\alpha \sM^{c \beta} \,.
$$ 
Recall the good filtration $\calF_k$ on $M$ from Equation ~\eqref{eq.FN}.
It follows that $\calF_k \sigma_c M$ is the span of 
$\sL^\alpha \sM^\beta \sigma_c M
$ for $|\alpha|+|\beta| \leq k$. Equivalently, it is the span of 
$\sM^\gamma \sL^\alpha \sM^{c \beta} \sigma_c M$ where $\gamma_i \in \{0,1,\dots,
c-1\}$ for $i=1,\dots,r$ and $|\gamma|+|\alpha|+c|\beta| \leq k$.
Since $\sM^\gamma \sL^\alpha \sM^{c \beta} \sigma_c M$ = $\sM^\gamma \sigma_c(
\sL^\alpha \sM^{\beta} \sigma_c M)$, and $|\alpha|+|\beta| \leq k$ and the number
of $\beta$ is $O(1)$, it follows that
the dimension of the span of $\calF_k \sigma_c M$ is at most the dimension of
the span of $\calF_k M$. Hence, if $M$ is $q$-holonomic $\Trp$-module, 
so is $\sigma_c M$.
\end{proof}
}

\subsection{$q$-holonomic modules are cyclic}
\label{sub.cyclic}

An interesting property of $q$-holonomic $\Tr$-modules $M$ is that they are
cyclic, i.e., are isomorphic to $\Tr/I$ for some left ideal $I$ of $\Tr$.
This is proven in \cite[Cor.2.1.6]{Sab}.


\section{Properties of $q$-holonomic functions}
\label{sec.properties}

\subsection{Fourier transform}
\label{sub.fourier}
\def\fF{\mathfrak F}
The idea of the Fourier transform $\fF(f)$ of a function $f \in S_r(V)$
is the following: the Fourier transform is simply the generating series
\be
\label{eq.Ff}
(\fF f)(z) = \sum_{n \in \BZ^r} f(n) z^n,
\ee
where $n=(n_1,\dots, n_r)$ and $z^n = \prod_{j=1}^r z_j ^{n_j}$. More formally,
let $\check{S}_r(V)$ denote the set whose elements are the expressions
of the right hand side of~\eqref{eq.Ff}. Then $\check{S}_r(V)$ is an 
$\cR$-module equipped with an action of $\Tr$ defined by
$$
(\sM_i g)(z)= g(z_1,\dots,z_{i-1},qz_i,z_{i+1},\dots,z_r),
\qquad (\sL_i g)(z)= z_i^{-1} g(z)
$$
for $g(z) \in \check{S}_r(V)$. The $\Tr$-module structure on $S_r(V)$
and $\check{S}_r(V)$ is chosen so that the following holds.

\begin{lemma}
\label{lem.F}
\rm{(a)}
The map $\fF: S_r(V) \to \check{S}_r(V)$ given by Equation~\eqref{eq.Ff}
is an isomorphism of $\Tr$-modules.
\newline
\rm{(b)}
$f\in S_r(V)$ is $q$-holonomic if and only if its Fourier
transform $\fF(f)$ is.
\newline
\rm{(c)} The relation of the Fourier and Mellin transform are as follows.
If $f \in S_r(V)$, then
$$
M_{\fF(f)} = \mathfrak{M}(M_f) \,.
$$
\end{lemma}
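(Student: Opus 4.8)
The plan is to establish (a) by a direct computation and then to obtain (b) and (c) formally from it. For (a), note first that $\fF$ is $\cR$-linear and bijective, its inverse being the map that reads off coefficients, $\sum_n c_n z^n\mapsto(n\mapsto c_n)$; as an $\cR$-module map, $\fF$ is simply the passage between a function and its generating series. It then remains to check that $\fF$ commutes with the generators of $\Tr$. For $\sM_i$ one computes
$$\fF(\sM_i f)(z)=\sum_{n\in\BZ^r}q^{n_i}f(n)\,z^n=(\fF f)(z_1,\dots,qz_i,\dots,z_r)=(\sM_i\,\fF f)(z),$$
and for $\sL_i$, after the substitution $m=n+e_i$ (with $e_i$ the $i$-th standard basis vector of $\BZ^r$),
$$\fF(\sL_i f)(z)=\sum_{n\in\BZ^r}f(n+e_i)\,z^n=z_i^{-1}\sum_{m\in\BZ^r}f(m)\,z^m=(\sL_i\,\fF f)(z).$$
Since $\sM_i^{\pm1},\sL_i^{\pm1}$ generate $\Tr$, this shows $\fF$ is a homomorphism of $\Tr$-modules, hence an isomorphism.

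For (b), I would use that $\fF$ restricts to a $\Tr$-module isomorphism of the cyclic modules $M_f=\Tr\cdot f$ and $M_{\fF(f)}=\Tr\cdot\fF(f)$, because $\fF(Pf)=P\,\fF(f)$ for every $P\in\Tr$. Now $q$-holonomicity of a finitely generated $\Tr$-module is defined purely through the homological invariants $\codim$ and $\dim$ of~\eqref{eq.codim2}, and these depend only on the isomorphism class of the module, since isomorphic modules have isomorphic groups $\Ext^j_{\Tr}(-,\Tr)$. Hence $M_f$ is $q$-holonomic if and only if $M_{\fF(f)}$ is, which is exactly the assertion of (b).

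For (c), the plan is to identify the submodule $M_{\fF(f)}\subset\check S_r(V)$ furnished by (a) with the Mellin transform $\mathfrak M(M_f)=(\phi_X)^*(M_f)$ for $X=\begin{pmatrix}0&I\\-I&0\end{pmatrix}$. I would first compute $\phi_X$ on the generators from its definition; the essential feature is that the Mellin twist interchanges the roles of the dilation operator $\sM_i$ and the shift operator $\sL_i$ (up to inverses and powers of $q$ dictated by~\eqref{eq.w3}). I would then write down the $\cR$-linear identification of $\check S_r(V)$ with $S_r(V)$ under which the $q$-dilation $z_i\mapsto qz_i$ and the multiplication by $z_i^{-1}$ correspond to $\phi_X(\sM_i)$ and $\phi_X(\sL_i)$, and aim to verify that it carries $M_{\fF(f)}$ onto $\mathfrak M(M_f)$. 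The step I expect to be the main obstacle is pinning down this identification correctly: it is \emph{not} the coefficient identification $z^m\leftrightarrow\delta_m$ used in (a) (which returns $M_f$ with its untwisted action), but rather the transform whose kernel is $q^{\langle m,n\rangle}$, evaluating the monomial $z^n$ at $z_i=q^{m_i}$; one then has to track carefully the powers of $q$ produced by the commutation relations~\eqref{eq.w3} to confirm that the twist produced is precisely $\phi_X$. Once the two pairs of generators are matched, the equality of the cyclic modules is formal.
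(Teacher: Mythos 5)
Your parts (a) and (b) are correct and complete. Note that the paper itself offers no proof of this lemma --- the $\Tr$-actions on $S_r(V)$ and $\check{S}_r(V)$ are declared to be ``chosen so that the following holds'' --- so your generator-by-generator verification for (a), and your observation for (b) that $\fF$ restricts to an isomorphism $M_f \cong M_{\fF(f)}$ of cyclic (hence finitely generated) modules while the invariants of \eqref{eq.codim2} depend only on the isomorphism class via $\Ext^j_{\Tr}(-,\Tr)$, supply exactly the details the paper leaves implicit.

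The gap is in part (c), in two respects. First, the device you single out as the key --- the transform with kernel $q^{\langle m,n\rangle}$, i.e.\ $g \mapsto \bigl(m \mapsto g(q^{m_1},\dots,q^{m_r})\bigr)$ --- is not a map on $\check{S}_r(V)$ at all: elements there are formal bilateral series, and evaluating $\sum_{n} c_n z^n$ at $z=q^m$ produces an infinite sum with no formal or convergent meaning. Second, and more to the point, no new identification is needed, and your explicit rejection of the coefficient identification from (a) is precisely the wrong turn. The space $\check{S}_r(V)$ carries a second, ``natural,'' $\Tr$-action in which $\sM_i$ acts by multiplication by $z_i$ and $\sL_i$ by the dilation $z_i \mapsto qz_i$; these satisfy \eqref{eq.w1}--\eqref{eq.w3}. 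Since the Mellin automorphism sends $\sM_i \mapsto \sL_i$ and $\sL_i \mapsto \sM_i^{-1}$, the action defined just before the lemma (dilation for $\sM_i$, multiplication by $z_i^{-1}$ for $\sL_i$) is exactly the $\phi_X$-twist of this natural action. Hence the coefficient identification $\fF$ of part (a), unchanged, exhibits $M_{\fF(f)}$ --- taken with the natural structure on the submodule generated by the generating series, which is the same subset of $\check{S}_r(V)$ because the two actions generate the same algebra of operators --- as the Mellin transform of $M_f$, up to replacing $X$ by $X^{-1}=-X$, an inversion convention the paper does not pin down (indeed the paper's displayed formula for $\phi_X$ is garbled: taken literally with the Mellin matrix it gives $\sM_i \mapsto \sM_i^{-1}$, $\sL_i \mapsto \sL_i$, which violates \eqref{eq.w3}, so the intended automorphism must be read off from the symplectic exchange you describe). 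Finally, even on its own terms your (c) is a plan (``aim to verify''), not a completed verification; with the twist identity above the verification is two lines, and you should carry it out rather than route it through an ill-defined evaluation map.
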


Now, suppose that $V$ is a commutative $\Zt$-algebra.
Then $S_r(V)$ is a $\Tr$-algebra. Hence $\check{S}_r(V)$, via $\fF$, 
inherits a product, known as the Hadamard product $\circledast$, given by
$$
f(z)=\sum_{n \in \BZ^r} f(n) z^n, \qquad
g(z)=\sum_{n \in \BZ^r} g(n) z^n, \qquad
f(z) \circledast g(z)=\sum_{n \in \BZ^r} f(n) g(n) z^n
$$
Of course $\fF$ is an isomorphism of algebras. Note that the action of 
$\Tr$ on the product of two functions are given by
\be
\label{eq.prod1a}
\sM_j (fg) = \sM_j(f) g = f \sM_j(g), \quad \sL_j(fg)= \sL_j(f) \sL_j(g).
\ee

The $\Zt$-subspace of $\check{S}_r(V)$ consisting of all power series 
with finite support is isomorphic to the group ring $\Zt[\BZ^r]$ and has
a natural product defined by multiplication of power series
in $z$, that corresponds to a convolution product on the subset of $S_r(V)$ 
consisting of functions with finite support. Unfortunately, multiplication 
of power series in $z$ cannot be extended to the whole $\check{S}_r(V)$. 
However, this convolution product can be extended to bigger subspaces 
as follows. For an integer  $k$ with $0 \leq k \leq r$, let 
$S_{r,k}(V)$ denote the set of
functions $f: \BZ^r=\BZ^k \times \BZ^{r-k} \to V$ such that for each
$n \in \BZ^k$, the support of $f(n,\cdot): \BZ^{r-k} \to V$ is a finite 
subset of $\BZ^{r-k}$. Let $S_{r,k}^\str(V)$ denote the set of
functions $f: \BZ^r=\BZ^k \times \BZ^{r-k} \to V$ that vanish outside
$J \times \BZ^{r-k}$ for some finite subset $J \subset \BZ^k$ ($J$
in general depends on $f$).

For $f\in S_{r,k}(V)$ and $g \in S_{r,k}^\str(V)$ one can define the 
convolution $f*g\in S_r(V)$ by
$$  
(f*g)(n)=\sum_{m \in \BZ^r} g(m) f(n-m) \,.
$$
The right hand side is well-defined since there are only a finite number 
of non-zero terms. The convolution is transformed into the product of 
power series by the Fourier transform: 
for  $f\in S_{r,k}^\str(V)$ and $g \in S_{r,k}^\str(V)$ we have:
\be
\label{eq.prod1}
\cF(f*g)= \cF(f) \cF(g) \,.
\ee

Note that
\be
\label{eq.prod1b}
\sM_j (f*g) = \sM_j(f) * \sM_j (g), \quad 
\sL_j(f*g)= \sL_j(f)*g = f* \sL_j(g) \,.
\ee

\subsection{Closure properties}
\label{sub.properties}

In this section we summarize the closure properties of the class of 
$q$-holonomic functions. These closure properties were known in the classical 
case (non $q$-case, see \cite{Z90}) and we are treating the $q$-case.
Theorem \ref{r.sum} below and Theorem \ref{thm.multisum} in the next section 
were known as folklore, but to the best of our knowledge, there were no 
proofs given in the literature. The main goal of this survey is to give proofs 
to these fundamental results. 

\begin{theorem}
\label{r.sum} 
The class of $q$-holonomic functions are closed under the following 
operations.
\begin{itemize}
\item[(a)] 
Addition: Suppose $f,g\in S_r(V)$ are $q$-holonomic. 
Then $f+g$ is $q$-holonomic.
\item[(b)] 
Multiplication: Suppose $f,g\in S_r(V)$ are $q$-holonomic. 
Then $fg$ is $q$-holonomic.
\item[(b')] 
Convolution: 
Suppose $f \in S_{r,k}(V)$ and $g \in S_{r,k}^\str(V)$
are $q$-holonomic. Then $f \! \ast  g $  is $q$-holonomic.
\item[(c)] 
Affine substitution: 
Suppose $f\in S_r(V)$ is $q$-holonomic, $A$ is an $r\times s$ 
matrix with integer entries, and $b \in \BZ^s$. Then $g\in S_s(V)$ defined by
$$
g(n) :=f(An + b)
$$ 
is $q$-holonomic.
\item[(d)]
Restriction: Suppose $f\in S_r(V)$ is $q$-holonomic and $a\in \BZ$. Then
$g\in S_{r-1}(V)$ defined by
$$
g(n_1,\dots,n_{r-1})= f(n_1,\dots,n_{r-1},a)
$$
is $q$-holonomic.
\item[(e)]
Extension: Suppose $f\in S_r(V)$ is $q$-holonomic.
Then $h \in S_{r+1}(V)$ defined by
$$
h(n_1,\dots,n_{r+1})= f(n_1,\dots,n_{r})
$$
is $q$-holonomic.
\item[(f)] 
Rescaling $q$: Suppose $f\in S_r(V)$ is $q$-holonomic where 
$V=\bk(q)\otimes_\bk V_0$ is a $\bk(q)$-vector space. Fix a nonzero integer 
$c$ and let $\sigma: \bk(q)\to\bk(q^c)$ be the field isomorphism given by 
$\sigma(q)=q^c$, $W=\bk(q^c)\otimes_\bk V_0$ and 
$g=\sigma \circ f \in S_r(W)$.
If $f$ is $q$-holonomic, then $g$ is $q$-holonomic.
\end{itemize}
\end{theorem}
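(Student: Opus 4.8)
The plan is to prove every item by passing to the cyclic modules $M_\phi=\Tr\cdot\phi$ and invoking the module-level closure results already available: sub/quotients and extensions (Proposition \ref{r.subquot}), box and convoluted box products (Proposition \ref{r.tensor}), push-forward (Proposition \ref{r.pushforward}), the symplectic automorphisms (Proposition \ref{prop.symplectic}), and the Fourier dictionary (Lemma \ref{lem.F}). The single recurring principle is that to see a function $\phi$ is $q$-holonomic it suffices to exhibit $M_\phi$ as a $\Tr$-submodule of the image of a $\Tr$-linear map whose source is a module already known to be $q$-holonomic, since submodules and quotients of $q$-holonomic modules are $q$-holonomic by Proposition \ref{r.subquot}.

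For the algebraic items (a), (b), (b') this principle applies directly. For addition, the external sum $M_f\oplus M_g$ is $q$-holonomic (an extension of $q$-holonomic modules, Proposition \ref{r.subquot}(b)), the pointwise-sum map $M_f\oplus M_g\to S_r(V)$, $(x,y)\mapsto x+y$, is $\Tr$-linear, and $f+g$ lies in its image, so $M_{f+g}$ is a submodule of a quotient of $M_f\oplus M_g$. For multiplication, the formulas \eqref{eq.prod1a} show that pointwise product intertwines the $\Tr$-action exactly as in the box product \eqref{eq.1a1}; hence $(x,y)\mapsto xy$ is $\cR[\sM_1^{\pm1},\dots,\sM_r^{\pm1}]$-balanced and factors through a $\Tr$-linear map $M_f\boxtimes M_g\to S_r(V)$ whose image contains $fg$, and $M_f\boxtimes M_g$ is $q$-holonomic by Proposition \ref{r.tensor}. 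Convolution is identical with $\boxtimes$ replaced by $\hbt$, comparing \eqref{eq.prod1b} with \eqref{eq.1b1}; here one first records that $S_{r,k}(V)$ and $S_{r,k}^\str(V)$ are $\Tr$-stable, so the convolution map is defined on all of $M_f\hbt M_g$.

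The geometric items (c)--(e) are the crux. Extension (e) I would settle by a direct count with the good filtration: monomials of degree $\le k$ applied to $h(n',n_{r+1})=f(n')$ produce functions of the form $q^{\alpha_{r+1}n_{r+1}}\,(\sM^{\alpha'}\sL^{\beta'}f)$, so $\rk_\Zt \cF_k M_h=O(k)\cdot O(k^r)=O(k^{r+1})$ and $d(M_h)\le r+1$. The main work is affine substitution (c). I would write $g(n)=f(An+b)$ as a fibrewise sum $g(n)=\sum_{m\in\BZ^r}\tilde f(m,n)\,\chi(m,n)$ on $\BZ^{r+s}$, where $\tilde f(m,n)=f(m)$ is the $s$-fold extension of $f$ and $\chi$ is the indicator of the affine set $\{m=An+b\}$. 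The indicator is $q$-holonomic because it is the pullback of the origin-delta $\delta_0=\delta\boxtimes\cdots\boxtimes\delta$ on $\BZ^r$ (holonomic by (b) and Example \ref{ex.fg2}), translated by $b$ and precomposed with the unimodular map $(m,n)\mapsto(m-An,n)\in\GL(r+s,\BZ)$; unimodular precomposition is exactly the symplectic automorphism attached to $X=\left(\begin{smallmatrix}A&0\\0&(A^T)^{-1}\end{smallmatrix}\right)$, so it preserves holonomicity by Proposition \ref{prop.symplectic}. Then $\tilde f\,\chi$ is holonomic by (b), and the outer sum over $m$ is the push-forward along the projection $\BZ^{r+s}\to\BZ^s$, holonomic by Proposition \ref{r.pushforward}. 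The delicate point --- the one I expect to be the real obstacle --- is identifying this fibrewise summation with the module push-forward $(T_A)_*$; this is cleanest through the Fourier transform (Lemma \ref{lem.F}), under which summing out the $m$-variables becomes the specialization $z\mapsto 1$ of the corresponding generating-series variables, matching the $\cR[\sM^{\pm1}]$-base change that defines $(T_A)_*$. Restriction (d) then falls out as the special case $A=\left(\begin{smallmatrix}I\\0\end{smallmatrix}\right)$, $b=(0,\dots,0,a)$ of (c).

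Finally, for rescaling (f) I would transport $M_f$ along the field isomorphism $\sigma\colon\bk(q)\to\bk(q^c)$ to obtain $M_g$ and compare good filtrations: applying $\sigma$ multiplies the $\sM$-degrees by $c$ while fixing the $\sL$-degrees, so $\cF_k M_g$ is spanned by the images of $\sM^\gamma\sL^\alpha\sM^{c\beta}$ with $\gamma\in\{0,\dots,c-1\}^r$ and $|\gamma|+|\alpha|+c|\beta|\le k$. Since only $O(1)$ values of $\gamma$ occur, this forces $\rk_\Zt\cF_k M_g=O(k^r)$ and $\dim M_g\le r$. Assembling (a)--(f) completes the proof.
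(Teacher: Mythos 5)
Parts (a), (b), (b$'$) of your proposal coincide with the paper's proof, and your direct filtration count for (e) is a valid (slightly different) substitute for the paper's derivation of (e) from (c); your (f) is the paper's argument, modulo the small point that one must also invoke finiteness of the extension $\bk(q)/\bk(q^c)$ to compare $\bk(q^c)$-spans with $\bk(q)$-dimensions. The genuine gap is exactly where you predicted it: the summation step in your proof of (c). Proposition \ref{r.pushforward} is a statement about the module $(T_A)_*(M)=\cR[\sM'^{\pm 1}]\otimes_{\cR[\sM^{\pm 1}]}M$, a base change along the monomial map $\sM^\alpha\mapsto(\sM')^{A^T\alpha}$; it nowhere asserts that summing a function over the fibers of a projection $\BZ^{r+s}\to\BZ^s$ preserves $q$-holonomicity. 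The function-level statement you need is precisely the multisum theorem (Theorem \ref{thm.multisum}), but in this paper its proof uses convolution (b$'$) \emph{and then restriction (d)} to evaluate the convolved function at $n_r=0$ --- and you obtain (d) as the special case $A=\left(\begin{smallmatrix}I\\ 0\end{smallmatrix}\right)$ of (c), so this route is circular. Your proposed escape via the Fourier transform does not close the hole: ``specialization $z\mapsto 1$'' is undefined on $\check{S}_r(V)$ (a formal bilateral series cannot be evaluated at $z=1$), and even on the fiberwise-finitely-supported subspace where the sum makes sense, no lemma in the paper identifies that operation with $(T_A)_*$ or any other holonomicity-preserving module operation.

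The repair is to use the push-forward for what it actually models, namely substitution itself, with no summation at all --- this is the paper's proof. First reduce to $b=0$ by observing $M_{\sL^b g}=(\cT_s\sL^b)g=\cT_s g=M_g$. Then note $(\sL^\beta g)(n)=f(An+A\beta)=\bigl((\sL')^{A\beta}f\bigr)(An)$ and $q^{(An)_i}$ corresponds to $\sM_1^{A_{i1}}\cdots\sM_s^{A_{is}}$; these identities make the $\cR$-linear map $\psi:(T_A)_*(S_r(V))\to S_s(V)$, $\psi(\sM^\alpha\otimes h)=\sM^\alpha(h\circ A)$, a $\cT_s$-module homomorphism with $\sM^\alpha\sL^\beta g=\psi\bigl(\sM^\alpha\otimes(\sL')^{A\beta}f\bigr)$, so $\cT_s g$ is a submodule of $\psi(\cT_r f)$, and Propositions \ref{r.pushforward} and \ref{r.subquot} finish. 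Two smaller remarks: your appeal to Proposition \ref{prop.symplectic} for the indicator $\chi$ is itself a module statement, so to conclude that $h\circ A$ is $q$-holonomic for $A\in\GL(r+s,\BZ)$ you would still need the intertwiner $(\phi_X)^*(M_h)\to S_{r+s}(V)$, $x\mapsto x\circ A$ (fixable, but once (c) is proven directly the whole detour through $\tilde f\chi$ is unnecessary); and $\delta_0=\delta\boxtimes\cdots\boxtimes\delta$ tacitly uses extension (e) before multiplication (b), which is harmless here only because your (e) is proven independently of (c).
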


\begin{proof}
(a) 
Recall that  $M_f = \Tr f$, which is a $\Tr$-module.
The map
$$
M_f \oplus M_g  \to S_r(V) \,,
$$
given by $x\oplus y \mapsto x+y$ is $\Tr$-linear and its image contains
$M_{f+g}$. Thus, $M_{f+g}$ is a subquotient of $M_f \oplus M_g$.
By Proposition \ref{r.subquot}, $M_{f+g}$ is
$q$-holonomic.

(b) From \eqref{eq.1a1} and \eqref{eq.prod1a} one sees that  the map
$$
M_f  \boxtimes M_g  \to S_r(V) \,,
$$
given by $x\otimes y \mapsto xy$ is $\Tr$-linear and its image contains
$M_{fg}$. Thus, $M_{fg}$ is a subquotient of
$M_f \boxtimes M_g$. By Proposition \ref{r.subquot}, $M_{fg}$ is
$q$-holonomic.

(b') From \eqref{eq.1b1} and \eqref{eq.prod1b} one sees that  the map
$$
M_f  \hbt M_g  \to S_r(V) \,,
$$
given by $x\otimes y \mapsto x*y$ is $\Tr$-linear and its image contains
$M_{f*g}$.  By Proposition \ref{r.subquot}, $M_{f*g}$ is
$q$-holonomic.

(c) For $b\in \BZ^s$ and $f\in S_s(V)$ let $h':= L^b h$. We have 
$h'(n) = h(n+b)$. Then 
$$
M_{h'}=\cT_s h'  = ( \cT_s  \sL^b) h= \cT_s h= M_h \,.
$$
Thus, $h$ is $q$-holonomic if and only if $h'$ is. Hence, we can assume 
that $b=0$ in proving (c).

Consider the linear map $\BZ^s \to \BZ^r$, $n=(n_1,\dots,n_s) \to 
An=n'= (n_1',\dots,n'_r)$. Then $g(n)=f(An)$. Observe that 
$q^{n'_i}=q^{A_{i1}n_1+ \dots A_{is}n_s}$, i.e.,
$\sM'_i=\sM_1^{A_{i1}} \dots \sM_s^{A_{is}}$. Moreover, 
$$
(\sL^\beta g)(n) = g(n+\beta)= f(An + A \beta)= ((\sL')^{A\beta} f)(An) \,.
$$
It follows that the $\cR$-linear map $\psi: (T_A)_*(S_r(V)) \to S_s(V)$, 
where $(T_A)_*(S_r(V))$ is the push-forward of $S_r(V)$ 
(see Section \ref{sub.pushforward}), given by
\be 
\psi(\sM^\al \ot h) =  \sM^\al (h \circ A)
\ee
is a $\cT_s$-module homomorphism.  Since
\be 
\sM^\al \sL^\beta g = \psi(M^\al \ot (\sL')^{A\beta} f),
\ee
and the set of all $\sM^\al \sL^\beta g$ $\cR$-spans $\cT_s \cdot g$, 
it follows that $\cT_s g$ is a submodule of $\psi(\cT_r f)$.
Since $f$ is $q$-holonomic, Propositions~\ref{r.pushforward} 
and~\ref{r.subquot} imply that $\cT_s g$ is a submodule of the $q$-holonomic 
module $\psi(\cT_r f)$, hence is $q$-holonomic. 

(d) and (e) are a special cases of (c).

(f) Observe that
$$
\sigma (a(q) \sL^\alpha \sM^\beta f)= \sigma (a(q) \sL^\alpha \sM^\beta)
g
$$
where
$$
\sigma (a(q) \sL^\alpha \sM^\beta) = a(q^c) \sL^\alpha \sM^{c \beta} \,.
$$
Assume that $f$ is $q$-holonomic with respect to $\Tr$. Using Proposition
\ref{prop.WWr}, it follows that that $f$ is $q$-holonomic with respect to 
$\Trp$, and it suffices to show that $g$ is $q$-holonomic with respect to 
$\Trp$.
Recall the good filtration $\calF_k$ on $\Trp f$ from Equation ~\eqref{eq.FN}.
It follows that $\calF_k g$ is the span of $\sL^\alpha \sM^\beta \sigma f$ 
for $|\alpha|+|\beta| \leq k$. Equivalently, it is the span of 
$\sM^\gamma \sL^\alpha \sM^{c \beta} \sigma f$ where $\gamma_i \in \{0,1,\dots,
|c|-1\}$ for $i=1,\dots,r$ and $|\gamma|+|\alpha|+c|\beta| \leq k$.
Since $\sM^\gamma \sL^\alpha \sM^{c \beta} \sigma f$ = $\sM^\gamma \sigma(
\sL^\alpha \sM^{\beta}   f)$, and $|\alpha|+|\beta| \leq k$ and the number
of $\gamma$ is $O(1)$, and the dimension of the extension $\bk(q)/\bk(q^c)$ 
is finite, it follows that
the dimension of the span of $\calF_k g$ is at most the dimension of
the span of $\calF_k f$, times a constant which is independent of $k$. 
Hence, $g$ is $q$-holonomic with respect to $\Trp$.  
\end{proof}

\subsection{Multisums}
\label{sub.multisums}

In this section we prove that multisums of $q$-holonomic functions are
$q$-holonomic. This important closure property of $q$-holonomic functions
(even in the case of multisums of $q$-proper hypergeometric functions)
is not proven in the literature, since the paper of Wilf-Zeilberger~\cite{WZ} 
predated Sabbah's paper~\cite{Sab} that provided the definition of 
$q$-holonomic functions. On the other hand, quantum knot invariants 
(such as the colored Jones and the colored HOMLFY polynomials) are 
multisums of $q$-proper hypergeometric functions~\cite{GL,GLL}, and hence 
$q$-holonomic. It is understood that a modification of the proof in the 
classical (i.e., $q=1$) case ought to work in the $q$-holonomic case.
At any rate, we give a detailed proof, which was a main motivation to 
write this survey article on $q$-holonomic functions.

Recall that $S_{r,1}(V)$ the set of all functions
$f: \BZ^r \to V$ such that for every $(n_1,\dots,n_{r-1})\in \BZ^{r-1}$,
$f(n_1,\dots,n_r)=0$ for all but a finite number of $n_r$.

\begin{theorem}
\label{thm.multisum}
\rm{(a)} Suppose $f\in S_{r,1}(V)$ is $q$-holonomic. Then,
$g\in S_{r-1}(V)$, defined by
$$
g(n_1,\dots,n_{r-1})= \sum_{n_r \in \BZ} f(n_1,\dots,n_r) \,,
$$
is $q$-holonomic.
\newline
\rm{(b)} Suppose $f\in S_r(V)$ is $q$-holonomic. Then
$h\in S_{r+1}(V)$ defined by
\be
\label{eq.multi}
h(n_1,\dots,n_{r-1},a,b)=\sum_{n_r=a}^b f(n_1,n_2,\dots,n_r)
\ee
is $q$-holonomic.
\end{theorem}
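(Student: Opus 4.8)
My plan is to reduce part (b) to part (a), and then to prove (a) by recognizing the summation map as a pushforward after a Mellin transform. For the reduction, I would introduce the summation variable explicitly: view $f$ as a function of the $r+2$ variables $(n_1,\dots,n_{r-1},a,b,n_r)$ that happens to be independent of $a,b$, which is still $q$-holonomic by iterating the Extension property (Theorem \ref{r.sum}(e)) and permuting variables (Theorem \ref{r.sum}(c)). The indicator of the summation range factors as $H(n_r-a)\,H(b-n_r)$, where $H$ is the $q$-holonomic Heaviside function (Example \ref{ex.fg2}(a)); each factor is an affine substitution of $H$ (Theorem \ref{r.sum}(c)), so their product, and hence $F:=f\cdot H(n_r-a)\,H(b-n_r)$, is $q$-holonomic by the multiplication closure (Theorem \ref{r.sum}(b)). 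For fixed $(n_1,\dots,n_{r-1},a,b)$ the function $F$ is supported on the finite set $\{a\le n_r\le b\}$, so $F\in S_{r+2,1}(V)$ with summation variable $n_r$, and $h=\sum_{n_r\in\BZ}F$. Thus part (a), applied in $r+2$ variables, yields (b).

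For (a), write $\Sigma f=\sum_{n_r}f$. I would first observe that $M_f=\cT_r f\subseteq S_{r,1}(V)$, because $\sM_r,\sL_r$ and $\sM_i,\sL_i$ ($i<r$) all preserve finiteness of the $n_r$-support. On $S_{r,1}(V)$ the map $\Sigma$ is $\cT_{r-1}$-linear ($\Sigma\sM_i=\sM_i\Sigma$ and $\Sigma\sL_i=\sL_i\Sigma$ for $i<r$), and reindexing the finite sum gives $\Sigma\sL_r=\Sigma$; hence $\Sigma$ kills $(\sL_r-1)M_f$ and descends to a $\cT_{r-1}$-linear map $\overline\Sigma\colon\overline M\to S_{r-1}(V)$ on $\overline M:=M_f/(\sL_r-1)M_f$. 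Since $M_g=\cT_{r-1}g=\overline\Sigma(\cT_{r-1}\overline f)$ is the image of a $\cT_{r-1}$-submodule of $\overline M$, Proposition \ref{r.subquot} reduces (a) to showing that $\overline M$ is $q$-holonomic over $\cT_{r-1}$.

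The crux is therefore the claim that $\overline M=M_f/(\sL_r-1)M_f$ is $q$-holonomic over $\cT_{r-1}$. Here I would pass through the Mellin transform $\fM$ of Section \ref{sub.symplectic}, which interchanges $\sM_j$ and $\sL_j$: by Corollary \ref{cor.mellin} the module $\fM(M_f)$ is $q$-holonomic, and the submodule $(\sL_r-1)M_f$ is carried to $(\sM_r-1)\fM(M_f)$ (up to the invertible factor $\sL_r$). The quotient $\fM(M_f)/(\sM_r-1)\fM(M_f)$ is precisely the pushforward $(T_A)_*(\fM(M_f))$ along the projection matrix $A$ that sends $\sM_r\mapsto 1$ and $\sM_i\mapsto\sM_i'$ for $i<r$ (the induced action having $\sL_i'=\sL_i$), so it is $q$-holonomic over $\cT_{r-1}$ by Proposition \ref{r.pushforward}. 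Applying the $(r-1)$-variable Mellin transform once more restores the original $\cT_{r-1}$-module structure on $\overline M$ while preserving $q$-holonomicity, finishing the crux.

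I expect the main obstacle to lie in two places. First, the hypothesis $f\in S_{r,1}(V)$ is essential and must be used exactly where it makes $\Sigma$ well-defined and the identity $\Sigma\sL_r=\Sigma$ valid; it cannot be relaxed, as the example $f(n_1,n_2)=\delta(n_1-n_2)$ shows, where $f$ is $q$-holonomic over $\cT_2$ and $g\equiv1$ is $q$-holonomic, yet the naive restriction-of-scalars module $\cT_1 f$ has dimension $2$ and is not $q$-holonomic --- it is only the relation $\sL_2\equiv1$ enforced in $\overline M$ that collapses the dimension. Second, I would need to verify carefully that the purely algebraic quotient $\overline M$ is genuinely identified, as a $\cT_{r-1}$-module, with Sabbah's pushforward under Mellin; this amounts to checking that the twisted action in the definition of $(T_A)_*$ agrees on the image of the generator with the action induced by $\sM_i,\sL_i$ ($i<r$), which holds because the relevant columns of $A$ are standard basis vectors.
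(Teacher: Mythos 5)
Your proof is correct, and while your part (b) is exactly the paper's reduction (multiply by $H(n_r-a)H(b-n_r)$ after extension and permutation of variables, then apply (a) in $r+2$ variables --- you even silently repair the typo in the paper's displayed identity, where $g$ should read $f$ under a sum over $n_r$), your part (a) takes a genuinely different route. The paper realizes the sum as a convolution: with $\nu(n)=\delta_{n_1,0}\cdots\delta_{n_{r-1},0}\in S^\str_{r,1}(V)$, the function $g'=f\ast\nu$ is $q$-holonomic by the convolution closure (Theorem \ref{r.sum}(b'), which rests on the convoluted box product $\hbt$, i.e.\ on Sabbah's tensor theorem transported by the Mellin transform), $g'$ is constant in $n_r$ and equals $g$, and restriction at $n_r=0$ (Theorem \ref{r.sum}(d)) finishes. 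You instead work at the module level: the summation map $\Sigma$ descends to $\overline M=M_f/(\sL_r-1)M_f$, which you identify, after Mellin, with Sabbah's pushforward $(T_A)_*$ along the coordinate projection (Proposition \ref{r.pushforward}), then Mellin back in $r-1$ variables (Proposition \ref{prop.symplectic}), and $M_g$ is a subquotient (Proposition \ref{r.subquot}). Both arguments are Fourier-dual implementations of ``evaluate the generating function at $z_r=1$'': the paper's is shorter given the closure properties already on the shelf, whereas yours is the direct $q$-analogue of the classical $D$-module proof that integration along fibers preserves holonomicity; it makes transparent that the hypothesis $f\in S_{r,1}(V)$ is used exactly for the well-definedness of $\Sigma$ and the identity $\Sigma\sL_r=\Sigma$, and it yields the stronger conclusion that the entire quotient $\overline M$ is $q$-holonomic over $\cT_{r-1}$, not merely $M_g$. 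Two details you flag deserve the care you give them: the paper's displayed formula for $\phi_X$ contains a typo (as printed it does not interchange $\sM_i$ and $\sL_i$), so your argument tacitly uses the intended automorphism $\sM_i\mapsto\sL_i^{\pm1}$, $\sL_i\mapsto\sM_i^{\mp1}$, under which $(\sL_r-1)M_f$ coincides with $(\sM_r-1)\fM(M_f)$ up to an invertible factor, as you assert; and the identification of the quotient with $(T_A)_*(\fM(M_f))$, including the matching of the twisted $\sL_i'$-action with the induced one, does hold because the columns of $A$ are standard basis vectors, exactly as you verify.
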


\begin{proof}
(a) Let $\nu \in S^\str_{r,1}(V)$ be defined by
$$\nu(n_1,\dots,n_r)=\delta_{n_1,0} \dots \delta_{n_{r-1},0}.$$
Lemma \ref{r.delta} and Theorem \ref{r.sum} show that $\nu$ is $q$-holonomic.
Hence $g'= f\!\ast \nu$ is $q$-holonomic. Note that $g'$ is constant on the 
last variable, and
$$ 
g'(n_1,\dots,n_r)= g(n_1,\dots,n_{r-1}) \,.
$$
In particular, 
$ g(n_1,\dots,n_{r-1})= g(n_1,\dots,n_{r-1},0).$
By Theorem \ref{r.sum}(d), $g$ is $q$-holonomic.

\no{

Since $f$ is $q$-holonomic, 

$\fF(f)\in \check{S}_{r-1,1}(V)$
is $q$-holonomic by Lemma \ref{lem.F}. The element
$\mu\in \check{S}_{r-1,1,s}(V)$ defined by
$$
\mu = \sum_{n\in \BZ} z_r^n
$$
is $q$-holonomic. Indeed, $\mu=\fF(\nu)$ where $\nu \in S_{r-1,1,s}(V)$
satisfies 

$\nu(n_1,\dots,n_r)=\delta_{n_1,0} \dots \delta_{n_{r-1},0}$.

By part (b) of Theorem~\ref{r.sum},
$\fF(f) \mu \in \check{S}_r(V)$ is $q$-holonomic.
By Lemma \ref{lem.F}, $G = \fF^{-1}(\fF \mu)$ is $q$-holonomic.
Note that $G$ is constant on the last variable, and
$$
G(n_1,\dots,n_{r-1}, n_r)= g(n_1,\dots,n_{r-1}).
$$
By part (c) of Theorem~\ref{r.sum}, it follows that $g$ is $q$-holonomic.
}

(b) follows from (a) using the identity
$$
h(n_1,\dots,n_{r-1},a,b) = g(n_1,\dots,n_r) H(n_r-a) H(b-n_r)
$$
where $H(n)$ is the Heaviside function~\eqref{eq.heavyside}.
\end{proof}

\subsection{Extending from $\BN^r$ to $\BZ^r$}
\label{sub.extending}
Here is an extension of Lemma \ref{lem.qholo1} to several variables.
\begin{proposition}
\label{prop.NZ}
\rm{(a)}
If $f \in S_r(V)$ is $q$-holonomic and $g \in S_{r,+}(V)$
is its restriction to $\BN^r$, then $g$ is $q$-holonomic.
\newline
\rm{(b)}
Conversely, if $g \in S_{r,+}(V)$ is $q$-holonomic and
$f \in S_r(V)$ is the extension of $g$ to $\BZ^r$ by zero (i.e.,
$f(n)=g(n)$ for $n \in \BN^r$, $f(n)=0$ otherwise), then $g$ is
$q$-holonomic.
\end{proposition}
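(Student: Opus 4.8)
The plan is to follow the two-directional structure of the one-variable Lemma~\ref{lem.qholo1}, treating the easy inclusion (a) and the substantive inclusion (b) by different means.

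\medskip

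For part (a), the key observation is that the restriction map $\rho\colon S_r(V)\to S_{r,+}(V)$, $\rho(h)=h|_{\BN^r}$, is $\Trp$-linear. Indeed, for a monomial $\sM^\al\sL^\beta$ with $\al,\beta\in\BN^r$ and any $n\in\BN^r$ one has $n+\beta\in\BN^r$, so the value $(\sM^\al\sL^\beta h)(n)$ depends only on the values of $h$ on $\BN^r$; hence $\rho(\sM^\al\sL^\beta h)=\sM^\al\sL^\beta\rho(h)$. (Note that $\rho$ is \emph{not} $\cT_r$-linear, since $\sL_i^{-1}$ reaches outside $\BN^r$; this is exactly why we must pass to $\Trp$.) By Proposition~\ref{prop.WWr}, $f$ is $q$-holonomic over $\Trp$, i.e. $\Trp f$ is a $q$-holonomic $\Trp$-module, so $d(\Trp f)\le r$. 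Since $\Trp g=\Trp\rho(f)=\rho(\Trp f)$, the module $\Trp g$ is a $\Trp$-quotient of $\Trp f$; comparing induced good filtrations gives $d(\Trp g)\le d(\Trp f)\le r$, which is the desired dimension bound. Equivalently, one argues at the operator level exactly as in Lemma~\ref{lem.qholo1}(a): if $Pf=0$ then $\sM^a\sL^bP\in\Trp$ for large $a,b$ and still annihilates $g$, so $\Ann_{\Trp}(g)$ is as large as $\Ann_{\cT_r}(f)$.

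\medskip

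Part (b) is the harder direction, and here I would generalize the correction-factor device of Lemma~\ref{lem.qholo1}(b). Suppose $Q\in\Ann_{\Trp}(g)$ has $\sL_i$-degree at most $d_i$ in each variable, and set
\[
P=\Big(\prod_{i=1}^r\prod_{j=1}^{d_i}(1-q^j\sM_i)\Big)\,Q .
\]
I claim $Pf=0$ on all of $\BZ^r$, where $f$ is the extension of $g$ by zero. Since the prefactor lies in $\cR[\sM]$ and acts diagonally, $Pf(n)=\big(\prod_{i,j}(1-q^{n_i+j})\big)(Qf)(n)$, and one checks three regimes. If $n\in\BN^r$, then every forward shift $n+\beta$ appearing in $Q$ lies in $\BN^r$, so $(Qf)(n)=(Qg)(n)=0$. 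If some coordinate satisfies $n_i\le -d_i-1$, then $(n+\beta)_i\le-1$ for every $\beta$ in the support of $Q$, so $f$ vanishes at all these points and $(Qf)(n)=0$. Finally, if some $n_i\in\{-d_i,\dots,-1\}$, the factor $1-q^{n_i+j}$ with $j=-n_i$ vanishes. Every $n\notin\BN^r$ has a negative coordinate and hence falls into one of the last two regimes, so $Pf\equiv0$ and $P\in\Ann_{\cT_r}(f)$.

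\medskip

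The main obstacle is to pass from this supply of annihilators to the dimension bound $\dim(\cT_r f)\le r$: a direct filtration count fails because the correction factors inflate degrees and the leading terms do not cancel. I would instead package the result structurally. By Proposition~\ref{r.01}(a) the module $M=\Tr\ot_{\Trp}\Trp g$ is $q$-holonomic over $\Tr$; using that $M$ is cyclic and that the corrected operators above annihilate the extension-by-zero, I would realize a $q$-holonomic extension $\tilde g\in S_r(V)$ of $g$, obtained by solving the corrected recurrences backward — the correction factors being precisely what guarantees consistency across the boundary hyperplanes $\{n_i<0\}$, where the trailing coefficients of $Q$ may vanish. Granting such a $\tilde g$, the argument closes cleanly: the extension-by-zero satisfies $f=\tilde g\cdot\prod_{i=1}^r H(n_i)$ with $H$ the Heaviside function of~\eqref{eq.heavyside}, each factor $H(n_i)$ is $q$-holonomic by Example~\ref{ex.fg2}(a) together with Theorem~\ref{r.sum}(e), and a product of $q$-holonomic functions is $q$-holonomic by Theorem~\ref{r.sum}(b). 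I expect the construction and backward-consistency of $\tilde g$ — exactly the boundary phenomenon already visible for $(q;q)_n$ in Section~\ref{sub.Zextension} — to be the delicate point of the proof.
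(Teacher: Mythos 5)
Your part (a) is correct and is essentially the paper's argument: the paper likewise observes that restriction commutes with the $\Trp$-action, so that $\mathrm{Res}(\calF_k f)=\calF_k g$, and the dimension bound follows. One caution: drop the ``equivalently'' remark at the end of (a) — for $r\ge 2$ a nonzero annihilator is far from sufficient for $q$-holonomicity, so the operator-level version of the argument proves nothing by itself; only the filtration count does the work.

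Part (b) has a genuine gap, and you have located it yourself: everything rests on the unproven existence of a $q$-holonomic extension $\tilde g\in S_r(V)$ of $g$. That existence statement is in fact \emph{equivalent} to the proposition being proved — given part (b), the zero extension is itself such a $\tilde g$, and given any $\tilde g$, your Heaviside-product argument recovers (b) — so ``granting such a $\tilde g$'' grants the theorem. Moreover, the sketched construction cannot work as described: solving the corrected recurrences backward is underdetermined precisely on the slabs $n_i\in\{-d_i,\dots,-1\}$, because the correction factors were built to kill the trailing coefficients there; and even if one made consistent choices, the resulting $\tilde g$ would be known to be annihilated only by finitely many corrected operators, i.e.\ would at best be $\Tr$-finite, and Remark \ref{rem.Dfinite} (Koutschan's example $f(n,k)=1/(q^n+q^k+1)$) shows $\Tr$-finiteness does not imply $q$-holonomicity. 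Your computation that $P=\bigl(\prod_{i=1}^r\prod_{j=1}^{d_i}(1-q^j\sM_i)\bigr)Q$ annihilates $f$ is correct and generalizes Lemma \ref{lem.qholo1}(b), but it supplies annihilators, not the dimension bound.

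The paper closes the argument by a different device worth internalizing. With $I=\Ann(g)\subset\Trp$ and $\tI=\Tr I$, it uses the short exact sequence $0\to\tI f\to\Tr f\to(\Tr f)/(\tI f)\to 0$. The quotient is a quotient of $\Tr/\tI=\Tr\ot_{\Trp}(\Trp/I)$, hence $q$-holonomic by Propositions \ref{r.01}(a) and \ref{r.subquot}. For the submodule, if $I=(p_1,\dots,p_k)$, then each $p_jf$ vanishes on $\BN^r$ (there $p_jf=p_jg=0$) and also far outside it (there $f\equiv 0$), so $p_jf$ is supported in a finite union of slabs $\cup_{0\le k\le r-1}(\BZ^k\times J\times\BZ^{r-1-k})$ with $J\subset\BZ$ finite, and induction on $r$ shows each $p_jf$ is $q$-holonomic; closure under extensions, Proposition \ref{r.subquot}(b), finishes. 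In other words, rather than repairing $f$ into a globally holonomic extension, the paper isolates the failure of $\Tr f$ to be a quotient of the holonomic module $\Tr\ot_{\Trp}\Trp g$ and shows this failure is supported near the boundary hyperplanes, where induction applies — exactly the role your boundary-slab computation should have played.
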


\begin{proof}
(a) For $h \in S_r(V)$, let $\mathrm{Res}(h) \in S_{r,+}(V)$ denote the
restriction of $h$ to $\BN^r \subset \BZ^r$. If $P \in \Trp$, observe that
$\mathrm{Res}(Pf)=Pg$, and consequently, $\mathrm{Res}(\calF_k f)=\calF_k g$.
It follows that if $f \in S_r(V)$ is  $q$-holonomic and
$g=\mathrm{Res}(f)$, then $g$ is $q$-holonomic.

(b) Let $I=\Ann(g)\subset \Trp$ and $\tI=\Tr I$ be its extension in $\Tr$. 
We have the following short exact sequence of $\Tr$-modules
\be
0 \to \tI \cdot f \to \Tr \cdot f \to (\Tr \cdot f) /(\tI \cdot f) \to 0 \,.
\ee

We claim that:
\begin{itemize}
\item[(1)]
$(\Tr \cdot f)/(\tI \cdot f)$ is $q$-holonomic over $\Tr$. 
\item[(2)]
$\tI \cdot f$ is $q$-holonomic over $\Tr$.
\end{itemize}
If that holds, Proposition~\ref{r.subquot} concludes the proof.

To prove (1), note that  $(\Tr \cdot f)/(\tI \cdot f)$ is a quotient of 
$\Tr/\tI= \Tr \otimes_{\Trp}  (\Trp/I)$. By Propositions \ref{r.01} 
and \ref{r.subquot}, $(\Tr \cdot f)/(\tI \cdot f)$ is 
$q$-holonomic over $\Tr$.

To prove (2), suppose $I$ is generated by $p_1,\dots,p_k$. It suffices to 
prove that each $p_j f$ is $q$-holonomic over $\Tr$. We prove this
by induction on $r$. For $r=1$, it is clear. Suppose it holds for $r-1$.
There is a finite set $J\subset \BZ$ such that the support of $p_j f$ is 
in $\cup_{0 \leq k \leq r-1} (\BZ^k \times J \times \BZ^{r-1-k})$. Without loss
of generality we can assume that $J$ consists of one element. In that case,
the induction hypothesis concludes that $p_j f$ is $q$-holonomic.
\end{proof}

\begin{corollary}
\label{sub.ZN}
Theorems~\ref{r.sum} (where in part (c) we assume $A: \BN^s \to \BN^r$
and $b \in \BN^s$) and~\ref{thm.multisum} hold for $q$-holonomic 
functions over $\Trp$. 
\end{corollary}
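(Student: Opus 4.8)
The plan is to deduce every clause from its already-proven $\Tr$-counterpart by means of the extension--restriction bridge of Proposition~\ref{prop.NZ}. Given inputs that are $q$-holonomic over $\Trp$ (hence defined on $\BN$-domains), I would first extend them by zero to the corresponding $\BZ$-domains; by Proposition~\ref{prop.NZ}(b) these zero extensions are $q$-holonomic over $\Tr$. I then apply the relevant operation of Theorem~\ref{r.sum} or Theorem~\ref{thm.multisum} over $\Tr$, obtaining a function that is $q$-holonomic over $\Tr$. Finally, I would identify the restriction of this output to the appropriate $\BN$-domain with the $\BN$-version of the operation applied to the original inputs, and invoke Proposition~\ref{prop.NZ}(a) to conclude $q$-holonomicity over $\Trp$. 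Thus the corollary rests, operation by operation, on a single compatibility check: that zero-extending the inputs, applying the $\BZ$-operation, and restricting back reproduces the $\BN$-operation.

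For addition, multiplication, extension (padding a dummy variable), and rescaling $q$, this check is immediate. Zero extension is $\cR$-linear and multiplicative (a product vanishes wherever either factor does), commutes with padding a coordinate, and is untouched by the field isomorphism $\sigma$; hence the restriction of the $\BZ$-output agrees on the nose with the $\BN$-output, and Proposition~\ref{prop.NZ}(a) applies. The restriction clause (setting $n_r=a$) needs only $a\in\BN$, so that evaluating the zero extension at $n_r=a\ge 0$ returns the original value; clauses (d) and (e) then follow as the corresponding special cases of affine substitution.

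The affine-substitution clause is where the modified hypotheses earn their keep, and I expect it to be the \emph{main obstacle}. Writing $\tilde f$ for the zero extension of $f$ and $\tilde g(n)=\tilde f(An+b)$, the $\Tr$-version of Theorem~\ref{r.sum}(c) gives that $\tilde g$ is $q$-holonomic over $\Tr$. To recover $g(n)=f(An+b)$ as the restriction of $\tilde g$ to $\BN^s$, I must know that $An+b\in\BN^r$ whenever $n\in\BN^s$; otherwise $\tilde f$ is being sampled at points where it was artificially set to zero, and the restriction of $\tilde g$ would not equal $g$. This is precisely what the corollary's extra hypotheses in part (c) guarantee (that $A$ carries $\BN^s$ into $\BN^r$ and that $b$ is nonnegative), and it is the one place where a genuine positivity condition is unavoidable.

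For convolution (b$'$) and for the multisums of Theorem~\ref{thm.multisum}, the same reduction works but requires checking that zero extension preserves the relevant finite-support class. Here I would verify that the zero extension of an $\BN$-function in the $\BN$-analogue of $S_{r,k}(V)$, of $S_{r,k}^\str(V)$, or of $S_{r,1}(V)$ again lies in $S_{r,k}(V)$, in $S_{r,k}^\str(V)$, or in $S_{r,1}(V)$ respectively: finiteness of support in the relevant coordinates is inherited, and padding the complementary coordinates with zeros only shrinks supports. With this in place, for $n$ in the $\BN$-domain every nonzero term in the $\BZ$-convolution $\sum_{m\in\BZ^r}\tilde g(m)\tilde f(n-m)$ already forces $m\in\BN^r$ and $n-m\in\BN^r$, so the $\BZ$-sum collapses to the $\BN$-convolution; similarly $\sum_{n_r\in\BZ}\tilde f=\sum_{n_r\in\BN}f$ on $\BN^{r-1}$ for the multisum of Theorem~\ref{thm.multisum}(a), and part (b) follows from part (a) via the Heaviside identity exactly as in the $\Tr$ proof. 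Restricting by Proposition~\ref{prop.NZ}(a) then finishes these cases as well.
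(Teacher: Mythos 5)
Your proposal is correct and follows exactly the route the paper intends: the corollary is stated without proof precisely because it is the immediate consequence of extending by zero via Proposition~\ref{prop.NZ}(b), applying the $\Tr$-versions of Theorems~\ref{r.sum} and~\ref{thm.multisum}, and restricting back via Proposition~\ref{prop.NZ}(a). Your careful verification of the compatibility checks---in particular that the positivity hypotheses $A(\BN^s)\subset\BN^r$, $b\ge 0$ are exactly what makes the affine-substitution case go through, and that zero extension preserves the finite-support classes needed for convolution and multisums---is a faithful and complete elaboration of that argument.
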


\subsection{Modifying and patching $q$-holonomic functions}
\label{sub.alter}

In this section we discuss how a modification of a $q$-holonomic function
by another one is $q$-holonomic, and that the patching of $q$-holonomic 
functions on orthants is a $q$-holonomic function.

\begin{proposition}
\label{prop.modify}
Suppose $V$ is an $\cR$-vector space and $f\in S_r(V), g\in S_{r-1}(V)$ are 
$q$-holonomic.
\begin{itemize}
\item[(a)] 
If $f'\in S_r(V)$ differs from $f$ on a finite set, then $f'$ is 
$q$-holonomic.
\item[(b)] 
Suppose $a\in \BZ$. If $f'=f$ except on the hyperplane 
$\BZ^{r-1}\times \{a \}$, where $f'(n,a)= g(n)$, then 
$f'$ is $q$-holonomic.
\end{itemize}
Similar statements holds for the case when the domains of $f, g$ are 
respectively $\BN^r, \BN^{r-1}$.
\end{proposition}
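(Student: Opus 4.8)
The plan is to treat both parts by the template $f'=f+(f'-f)$ and to show that in each case the correction term $f'-f$ is $q$-holonomic; then $f'$ is $q$-holonomic by Theorem~\ref{r.sum}(a). In part~(a) the correction is supported on a finite set, and in part~(b) it is supported on the single hyperplane $n_r=a$. The whole proof thus reduces to one building block, which I will call the \emph{pinning lemma}: if $\phi\in S_{r-1}(V)$ is $q$-holonomic and $a\in\BZ$, then the function $\Phi\in S_r(V)$ given by $\Phi(n_1,\dots,n_{r-1},n_r)=\phi(n_1,\dots,n_{r-1})\,\delta(n_r-a)$ is $q$-holonomic.

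To prove the pinning lemma I would first note that the extension $\tilde\phi(n,n_r):=\phi(n)$ is $q$-holonomic by Theorem~\ref{r.sum}(e), and that the scalar-valued function $\eta\in S_r(\cR)$, $\eta(n,n_r):=\delta(n_r-a)$, is $q$-holonomic: it is obtained from the one-variable delta of Example~\ref{ex.fg2} by an extension (Theorem~\ref{r.sum}(e)), a coordinate permutation and a shift (Theorem~\ref{r.sum}(c)). Since $\eta$ is scalar-valued, the pointwise product $S_r(V)\times S_r(\cR)\to S_r(V)$ is defined for an arbitrary $\cR$-vector space $V$, and exactly as in the proof of Theorem~\ref{r.sum}(b) the map $M_{\tilde\phi}\boxtimes M_{\eta}\to S_r(V)$, $x\otimes y\mapsto xy$, is $\Tr$-linear by~\eqref{eq.1a1} and~\eqref{eq.prod1a}. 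Its image contains $M_\Phi$, and $M_{\tilde\phi}\boxtimes M_\eta$ is $q$-holonomic by Proposition~\ref{r.tensor}; hence $M_\Phi$ is $q$-holonomic by Proposition~\ref{r.subquot}. Granting this, part~(b) is immediate: set $\phi:=g-f_a$, where $f_a\in S_{r-1}(V)$, $f_a(n):=f(n,a)$, is $q$-holonomic by Theorem~\ref{r.sum}(d); then $\phi$ is $q$-holonomic by Theorem~\ref{r.sum}(a), the pinning lemma makes $\Phi=\phi\cdot\eta$ $q$-holonomic, and a direct check shows $f'=f+\Phi$.

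For part~(a), I would show that every finitely supported function is $q$-holonomic. A function supported at a single point $m\in\BZ^r$ with value $v\in V$ is $q$-holonomic: applying the pinning lemma $r$ times, once in each coordinate, starting from the (trivially $q$-holonomic) constant $v\in S_0(V)=V$, produces $v\prod_{i=1}^r\delta(n_i-m_i)$. Alternatively one computes directly that $\sM^\alpha\sL^\beta$ sends such a point mass $v\delta_m$ to a scalar multiple of the point mass at $m-\beta$, so that $\dim_\cR \calF_k\bigl(\Trp\cdot v\delta_m\bigr)=O(k^r)$ and there is no monomial torsion, giving $q$-holonomicity over $\Trp$ (hence over $\Tr$ by Proposition~\ref{prop.WWr}). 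Writing the finitely supported function $f'-f$ as a finite sum of such point masses and applying Theorem~\ref{r.sum}(a) repeatedly shows that $f'-f$, and therefore $f'=f+(f'-f)$, is $q$-holonomic.

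Finally, for the versions with domains $\BN^r$ and $\BN^{r-1}$, I would reduce to the integer case: extend all functions by zero to $\BZ^r$ (resp.\ $\BZ^{r-1}$), which preserves $q$-holonomicity by Proposition~\ref{prop.NZ}(b); observe that the zero-extension of $f'$ is obtained from the zero-extension of $f$ by exactly the modification described in~(a) or~(b); apply the integer version already proved; and restrict back to $\BN^r$ using Proposition~\ref{prop.NZ}(a). The one genuinely non-formal point in the whole argument is the pinning lemma---more precisely, the observation that multiplication by a scalar-valued $q$-holonomic function keeps us inside the $q$-holonomic class even when $V$ carries no algebra structure; everything else is bookkeeping with the closure operations of Theorem~\ref{r.sum} and Propositions~\ref{r.tensor} and~\ref{r.subquot}.
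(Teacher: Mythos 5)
Your proof is correct and follows essentially the same route as the paper's: the paper likewise writes $f'=(1-\delta(n_r-a))f+\delta(n_r-a)\tilde g$ in part (b) and treats $f-f'$ in part (a) as a finite linear combination of (multivariable) delta functions, invoking the closure properties of Theorem~\ref{r.sum}. Your ``pinning lemma'' is just the multiplication step of Theorem~\ref{r.sum}(b) made explicit for a scalar-valued factor --- a worthwhile refinement, since $V$ here is only an $\cR$-vector space rather than an algebra, a point the paper's appeal to multiplication silently elides --- and your reduction of the $\BN^r$ case via Proposition~\ref{prop.NZ} similarly fills in what the paper leaves as ``similar statements hold.''
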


\begin{proof}
(a) In this case, $f-f'$ is a finite linear combination of delta 
functions, which is $q$-holonomic by Theorem \ref{r.sum} and the 
$q$-holonomicity of the one-variable delta function. By 
Theorem \ref{r.sum}, $f'$ is $q$-holonomic.

(b) The function $\tg\in S_r(V)$, defined by 
$\tg(n_1,\dots,n_r)= g(n_1,\dots, n_{r-1})$ is $q$-holonomic by 
Theorem \ref{r.sum}.  We have
$$ 
f'= (1- \delta(n_r-a)) f + \delta(n_r-a) \tg \,.
$$
By Theorem \ref{r.sum}, $f'$ is $q$-holonomic. 
\end{proof}

Let $\BN_+=\BN$ and $\BN_-= \{ -n \mid n \in \BN\} \subset \BZ$. There is 
a canonical isomorphism $\BN_-\to \BN_+$ given by $n \mapsto -n$. We have 
$\BZ= \BN_+\cup \BN_-$.

For $\ve= (\ve_1,\dots,\ve_r) \in \{+, -\}^r$ define the $\ve$-orthant of 
$\BZ^r$ by
$$ 
\BN_\ve= \BN_{\ve_1} \times \BN_{\ve_2} \times \dots \times \BN_{\ve_r} 
\subset \BZ^r \,.
$$
The canonical isomorphism $\BN_-\to \BN$ induces a canonical isomorphism 
$\BN_\ve \cong \BN^r$, and a function $f: \BN_\ve \to V$ is called 
$q$-holonomic if its pull-back as a function on $\BN^r$ is $q$-holonomic.

\begin{proposition} 
\label{prop.patching}
A function $f\in S_r(V)$ is $q$-holonomic if and only if its restriction 
on each orthant is $q$-holonomic.
\end{proposition}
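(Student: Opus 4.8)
The plan is to prove the two directions separately, with the forward direction (global $q$-holonomicity implies orthant-wise $q$-holonomicity) being straightforward and the converse (patching) being the substantive content. For the forward direction, fix an orthant $\BN_\ve$. Its pullback to $\BN^r$ is obtained from $f$ by an affine substitution, namely the sign-flip map $n_i \mapsto \ve_i n_i$ recorded by a diagonal matrix $A \in \GL(r,\BZ)$ with entries $\pm 1$, composed with restriction to $\BN^r$. By Theorem~\ref{r.sum}(c), the affine substitution of the $q$-holonomic $f$ is $q$-holonomic over $\Tr$; then Proposition~\ref{prop.NZ}(a) shows the restriction to $\BN^r$ is $q$-holonomic over $\Trp$, which is exactly the definition of the orthant-restriction being $q$-holonomic. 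So this direction is essentially a bookkeeping assembly of results already in hand.

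For the converse, the idea is to write $f$ as a finite sum of functions, each supported on a single closed orthant, and then glue. First I would introduce, for each $\ve \in \{+,-\}^r$, the indicator-type cutoff of $f$ to the orthant $\BN_\ve$: define $f_\ve \in S_r(V)$ to agree with $f$ on $\BN_\ve$ and vanish elsewhere. The hypothesis says each orthant-restriction of $f$ is $q$-holonomic over $\Trp$; applying the sign-flip affine substitution (via the $\GL(r,\BZ)$ matrix above, which is invertible so carries $q$-holonomicity both ways by Theorem~\ref{r.sum}(c)) identifies this with a $q$-holonomic function on $\BN^r$, and then Proposition~\ref{prop.NZ}(b) lets me extend by zero to get a $q$-holonomic function on $\BZ^r$; flipping signs back shows $f_\ve$ is $q$-holonomic. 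Once every $f_\ve$ is known to be $q$-holonomic, the natural move is to reconstruct $f$ from the $f_\ve$ by addition, invoking Theorem~\ref{r.sum}(a).

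The main obstacle is that the $2^r$ closed orthants overlap on the coordinate hyperplanes (where some $n_i=0$), so the naive sum $\sum_\ve f_\ve$ overcounts $f$ on those lower-dimensional pieces and does not reproduce $f$. The clean way around this is inclusion--exclusion: rather than summing indicators of closed orthants, I would engineer the correct signed combination so that the overlaps cancel. Concretely, one can split each coordinate axis as $\BZ = \BN_+ \cup \BN_-$ with overlap $\{0\}$, and use the Heaviside and delta functions (both $q$-holonomic in one variable, by Examples~\ref{ex.fg2} and~\ref{ex.fg1}) to build, variable by variable, a partition-of-unity type decomposition of $f$. Multiplying these one-variable cutoffs across the $r$ coordinates expresses $f$ as a finite sum of products of $f$ with $q$-holonomic ``characteristic'' functions, each product supported on a single orthant and equal there to the corresponding $f_\ve$ up to the lower-dimensional corrections; the products are $q$-holonomic by Theorem~\ref{r.sum}(b). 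Summing and applying Theorem~\ref{r.sum}(a) finishes the proof. The delicate point to get exactly right is ensuring that the boundary contributions (functions supported on coordinate hyperplanes, which are themselves restrictions of $f$ to lower-dimensional orthants) are accounted for; these are handled either inductively on $r$ or absorbed by the modification result Proposition~\ref{prop.modify}(b), since they live on finitely many hyperplanes.
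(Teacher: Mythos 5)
Your proposal is correct and follows essentially the same route as the paper: extend each orthant restriction by zero via Proposition~\ref{prop.NZ}(b) (after a sign-flip by a diagonal matrix in $\GL(r,\BZ)$, using Theorem~\ref{r.sum}(c)), observe that $f-\sum_\ve f_\ve$ is supported on a finite union of coordinate hyperplanes, and dispose of that remainder by induction on $r$ together with Proposition~\ref{prop.modify}(b), finishing with Theorem~\ref{r.sum}(a). One small caution: your appeal to Theorem~\ref{r.sum}(b) to conclude that the products $f\cdot\chi_\ve$ are $q$-holonomic is circular as stated (it presupposes the $q$-holonomicity of $f$, which is the goal), but it is also unnecessary, since those products are exactly the zero-extensions $f_\ve$ that you have already shown $q$-holonomic via Proposition~\ref{prop.NZ}(b).
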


\begin{proof} 
If $f \in S_r(V)$ is $q$-holonomic, then its restriction to an orthant
is the restriction to $\BN^r$ of $A \circ f$ where $A \in \GL(r,\BZ)$
is a linear transformation. Part (c) of Theorem \ref{r.sum} together with
Proposition \ref{prop.NZ} conclude that the restriction of $f$ to each
orthant is $q$-holonomic.

Conversely, consider a function $f$ and its restriction $f_\epsilon$ to
the orthant $\BN_\epsilon$. Proposition \ref{prop.NZ} implies that
the extension $g_\epsilon$ of $f_\epsilon$ by zero is $q$-holonomic for all
$\epsilon$. Moreover, $f-\sum_\epsilon g_\epsilon$ is a function supported on
a finite union of coordinate hyperplanes. By induction on $r$, (the case
$r=1$ follows from Proposition \ref{prop.modify}) we may
assume that this function is $q$-holonomic. Part (b) of 
Proposition \ref{prop.modify} concludes the proof.
\end{proof}


\section{Examples of $q$-holonomic functions} 
\label{sub.basic}

Besides the $q$-holonomic functions of one variable given in 
Example~\ref{ex.fg1} (with domain extended to $\BZ$ via 
Lemma~\ref{lem.qholo1}), we give here some basic examples of $q$-holonomic 
functions. These examples can be used as building blocks in the 
assembly of more $q$-holonomic functions using the closure properties
of Section~\ref{sub.properties}.

Recall that for $n \in \BN$,
\be
\label{eq.qp}
(x;q)_n= \prod_{j=1}^n( 1- x q^{j-1}).
\ee

\begin{lemma} 
\label{r.delta}
The delta function $\BZ^2 \to \BQ(q)$, given by 
$(n,k)\to  \delta_{n,k}$, is $q$-holonomic.
\end{lemma}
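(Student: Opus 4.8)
The plan is to show that the two-variable delta function $\delta_{n,k}$ is $q$-holonomic by exhibiting an explicit annihilating system, or alternatively by recognizing it as a product/restriction of one-variable $q$-holonomic pieces. The most direct route I would take is to find enough recurrence operators in $\cT_2$ that annihilate $\delta_{n,k}$ and verify that the resulting module has dimension $\le r=2$. A cleaner conceptual route, which I would prefer, is to observe that $\delta_{n,k}=\delta(n-k)$, i.e. the one-variable delta function composed with the affine substitution $(n,k)\mapsto n-k$. Since the one-variable delta function $\delta\in S_1(V)$ is $q$-holonomic (Example~\ref{ex.fg2}(b)), and affine substitution preserves $q$-holonomicity, this would finish the proof.

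More concretely, let $\delta\in S_1(\BQ(q))$ be the one-variable delta function, which satisfies $(1-q^n)\delta(n)=0$ and is $q$-holonomic. Let $A$ be the $1\times 2$ integer matrix $A=\begin{pmatrix} 1 & -1\end{pmatrix}$, so that $A\binom{n}{k}=n-k$. Then the function $g\in S_2(\BQ(q))$ defined by $g(n,k)=\delta(A\binom{n}{k})=\delta(n-k)=\delta_{n,k}$ is exactly the function in the statement. First I would invoke Theorem~\ref{r.sum}(c), affine substitution, with $s=2$, $r=1$, $b=0$: since $\delta$ is $q$-holonomic and $A$ has integer entries, $g$ is $q$-holonomic. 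This reduces the two-variable claim entirely to the already-established one-variable fact, with no further computation.

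For completeness, and in case one prefers an explicit certificate rather than appealing to the closure theorem, I would also write down annihilating operators directly. The function $\delta_{n,k}$ is killed by $(1-q^{n-k})$ acting as $\sM_1\sM_2^{-1}-1$, reflecting that it is supported on $n=k$. It is also killed by $\sL_1\sL_2^{-1}-1$ (shifting both indices equally preserves the function) and more usefully by operators that shift off the diagonal. One checks that $\sM_1\sM_2^{-1}-1$ and $\sL_1\sM_1-\sL_2\sM_2$ (or a similar pair) generate a left ideal whose quotient has the good filtration growing like $O(k^2)$, giving $\dim\le 2=r$. Verifying the dimension bound directly is the tedious part, which is precisely why the affine-substitution argument is preferable.

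The main obstacle, if one took the direct homological route, would be controlling the dimension of the cyclic module $\cT_2\cdot\delta_{n,k}$ to confirm $\dim\le 2$, together with checking absence of monomial torsion in the $\Trp$ setting; the filtration growth estimate, while not hard, requires care. However, the affine-substitution approach sidesteps this entirely, so I expect no real obstacle: the only thing to be careful about is that Theorem~\ref{r.sum}(c) is stated for a general field-valued codomain $V$, and here $V=\BQ(q)$ with $\delta$ already extended to $\BZ$, which is exactly the hypothesis needed. Thus the proof should be a one-line reduction, and I would present it as such.
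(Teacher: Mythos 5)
Your proof is correct and is essentially identical to the paper's: the paper likewise writes $\delta_{n,k}=\delta(n-k)$ and concludes by Example~\ref{ex.fg2} together with the affine-substitution closure property, Theorem~\ref{r.sum}(c). Your additional sketch of an explicit annihilating ideal is unnecessary (and the paper does not pursue it), but it does not detract from the one-line reduction, which is exactly the intended argument.
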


\begin{proof} 
We have  $\delta_{n,k}= \delta(n-k)$. By Example \ref{ex.fg2} and 
Theorem \ref{r.sum}, $\delta_{n,k}$ is $q$-holonomic.
\end{proof}

For $n,k\in \BZ$, let
$$
F(n,k) = \begin{cases}
(q^n;q^{-1})_k,  & \text{if $k\ge 0$}\\
0 & \text{if $k<0$} \end{cases}
$$
$$
G(n,k) 
= \frac{F(n,k)}{(q^k;q^{-1})_k} = \begin{cases}
\frac{(q^n;q^{-1})_k}{(q^k;q^{-1})_k}  & \text{if $k\ge 0$}\\
0 & \text{if $k<0$} \end{cases}.
$$
Note that 
$$
G(n,k) = \binom nk _q = \frac{(q;q)_n}{(q;q)_k (q;q)_{n-k}}
$$ 
is the $q$-binomial coefficient~\cite{Lu} if 
$n \geq k \ge 0$.
In quantum topology (related to the colored HOMFLYPT polynomial~\cite{GLL}) 
we will also use the following extended $q$-binomial
defined for $n,k \in \BZ$ by

\be
\label{eq.qbn1}
H(n,k) = { x;  n \brack k} =\begin{cases} 0 \quad & \text{if } \ k < 0 \\
\prod_{j=1}^k
\frac{  x q^{ n -j+1}- x^{-1} q^{-n+j-1}  }{q^j - q^{-j}} &
\text{if } \ k \ge  0.
\end{cases}
\ee
Let
\be
\label{eq.K}
K(n,k,\ell)= { q^\ell;  n \brack k}
\ee

\begin{lemma}
\label{lem.FGH}
\rm{(a)} Suppose $\bk=\BQ$. Then, the functions $F$ and $G$ are 
$q$-holonomic.
\newline
\rm{(b)} Suppose $\bk= \BQ(x)$. Then, the function $H$ is $q$-holonomic.
\newline
\rm{(c)} The function $K$ is $q$-holonomic.
\end{lemma}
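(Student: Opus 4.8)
The plan is to reduce all three parts to a single core statement about a two-variable $q$-Pochhammer type function, namely $F$, and to establish that core statement by combining two explicit first-order recurrences with the closure properties of Theorem~\ref{r.sum} and the orthant-patching of Proposition~\ref{prop.patching}. The reductions are straightforward. Since $(q^k;q^{-1})_k=(q;q)_k$, the function $G$ is $F$ times the extension by zero of $k\mapsto 1/(q;q)_k$; the latter is $q$-holonomic in the single variable $k$ by Example~\ref{ex.fg1}(e) and Lemma~\ref{lem.qholo1}, and after viewing it as a function of $(n,k)$ that is constant in $n$ (Theorem~\ref{r.sum}(e)) the product $G=F\cdot\widehat{1/(q;q)}$ is $q$-holonomic by Theorem~\ref{r.sum}(b), the two definitions agreeing for $k<0$ because $F$ vanishes there. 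For $H$, the factor $k\mapsto 1/D(k)$ with $D(k)=\prod_{j=1}^k(q^j-q^{-j})$ is again a one-variable $q$-holonomic function (first order recurrence), so $H$ equals the numerator $\prod_{j=1}^k(xq^{n-j+1}-x^{-1}q^{-n+j-1})$ times this extended reciprocal. Finally $K(n,k,\ell)={q^\ell;n\brack k}$ depends on $(n,\ell)$ only through $n+\ell$, so $K$ is the affine substitution $(n,k,\ell)\mapsto(n+\ell,k)$ (Theorem~\ref{r.sum}(c)) applied to a two-variable function whose numerator has the same shape as $H$'s. In both numerator cases one factors out elementary $q$-holonomic terms such as $(-1)^k$ and $q^{nk}$ (which satisfy $(\sL_1-\sM_2)q^{nk}=0$, $(\sL_2-\sM_1)q^{nk}=0$) and rescales $q\mapsto q^2$ (Theorem~\ref{r.sum}(b),(f)), whereupon the numerator becomes $F$ at $q^2$. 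Thus everything follows once $F$ is shown $q$-holonomic.

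For $F$ I would first record two first-order recurrences holding on all of $\BZ^2$. The $n$-recurrence
\[
(1-q^{n-k+1})F(n+1,k)=(1-q^{n+1})F(n,k)
\]
holds everywhere (both sides vanish for $k<0$, and for $k\ge0$ it is the telescoping identity), while the naive relation $F(n,k+1)=(1-q^{n-k})F(n,k)$ fails at $k=-1$ and must be repaired by multiplying through by $1-q^{k+1}$, giving
\[
(1-q^{k+1})F(n,k+1)=(1-q^{k+1})(1-q^{n-k})F(n,k),
\]
now valid on $\BZ^2$. In operator form, using $q^{n-k}=\sM_1\sM_2^{-1}$ and $q^{k+1}=q\sM_2$, these exhibit nonzero elements of $\Ann(F)\subset\Tr$ that are first order in $\sL_1$ and in $\sL_2$ respectively.

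To pass from these recurrences to $q$-holonomicity I would argue orthant by orthant and patch with Proposition~\ref{prop.patching}. On the two orthants with $k<0$ the function $F$ vanishes, hence is $q$-holonomic. On the orthant $n,k\ge0$ one has the closed form $F(n,k)=(q;q)_n/(q;q)_{n-k}$ (with $1/(q;q)_m:=0$ for $m<0$), so there $F$ is the restriction to $\BN^2$ of the $\BZ^2$-function $\widehat{(q;q)}(n)\cdot\widehat{1/(q;q)}(n-k)$, which is $q$-holonomic by Example~\ref{ex.fg1}(d),(e), Lemma~\ref{lem.qholo1}, and parts (b),(c),(e) of Theorem~\ref{r.sum}; Proposition~\ref{prop.NZ}(a) then supplies the restriction. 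The remaining orthant $n\le0,\,k\ge0$ is the crux: there $F$ is genuinely nonzero and the $(q;q)_\bullet$-factorization breaks, so I would instead use the $q^{-1}$-factorial $\Pi(m)=\prod_{s=1}^m(1-q^{-s})$, which is $q$-holonomic by rescaling $q\mapsto q^{-1}$ (Theorem~\ref{r.sum}(f)). After the coordinate flip $n\mapsto -n$ the pulled-back function equals $\Pi(n+k-1)\cdot\widehat{1/\Pi}(n-1)$ away from the line $n=0$, which is $q$-holonomic by the same closure operations, and the single coordinate line is corrected using Proposition~\ref{prop.modify}(b). Patching the four orthants via Proposition~\ref{prop.patching} yields that $F$ is $q$-holonomic on $\BZ^2$.

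The main obstacle I anticipate is exactly this negative-index region: the tidy identification of $F$ with a ratio of $q$-factorials only holds for $n\ge0$, so one cannot conclude $q$-holonomicity from a single global product formula and must either patch across orthants (as above) or attempt a direct Hilbert-dimension estimate from the two recurrences. I would avoid the latter, since clearing the denominators $1-q^{k+1}$ and $1-q^{n-k+1}$ in order to land in $\Trp$ inflates the principal symbols and makes the characteristic variety appear larger than it actually is, so the bound $d(M)\le r$ is not visible from the displayed generators of $\Ann(F)$ alone; the orthant-patching route sidesteps this entirely by invoking only one-variable examples together with the already-established closure properties.
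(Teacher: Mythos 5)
Your part (a) is essentially the paper's own proof: the same four-orthant decomposition (with $F=0$ on the two quadrants $k<0$), the same product formula on $\BN_{+,+}$ (the paper writes $(q^n;q^{-1})_n\cdot 1/(q^{n-k};q^{-1})_{n-k}$, which is exactly your $(q;q)_n/(q;q)_{n-k}$ with the zero convention), an equivalent sign-times-monomial-times-factorial-ratio factorization on $\BN_{-,+}$ after the flip $m=-n$ (the paper uses $(-1)^kq^{-km}q^{-k(k-1)/2}(q;q)_{m+k-1}$ divided by a $q$-factorial where you use the $q^{-1}$-factorial $\Pi$), patching via Proposition \ref{prop.patching}, and the identical one-line deduction of $G$. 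If anything you are more careful than the paper at the boundary: the paper's displayed factorization on $\BN_{-,+}$ has a slip in its denominator (it should be a factorial in $m-1$, not in $k$; test $m=3,k=1$) and silently ignores the failure of the formula at $m=k=0$, which you explicitly repair with Proposition \ref{prop.modify}. Your two global recurrences for $F$ are correct but, as you rightly observe, they only certify $\cT_2$-finiteness, which by Remark \ref{rem.Dfinite} does not imply $q$-holonomicity, so discarding them was the right call. Where you genuinely diverge is (b) and (c): the paper converts the numerator to $(x^2q^{2(n-k+1)};q^2)_k$ and then \emph{expands} it by the Gauss binomial formula into the terminating sum \eqref{eq.HKidentity}, concluding by closure under products and sums, with (c) obtained by setting $x=q^\ell$ in that identity; your route is sum-free and purely multiplicative, which buys independence from the summation machinery at the cost of one extra two-variable Pochhammer lemma.

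That extra lemma is where your write-up has its one real inaccuracy: for transcendental $x$ the numerator of $H$ does \emph{not} ``become $F$ at $q^2$'' --- it becomes the $x$-twisted Pochhammer $(x^2q^{2(n-k+1)};q^2)_k$, and $x$ cannot be absorbed into a shift of $n$; that absorption is exactly what makes your reduction correct in case (c), where $x=q^\ell$ and the substitution $m=n+\ell$ really does produce $\sigma_2 F$. The fix stays inside your own framework and is in fact easier than $F$: over $\bk=\BQ(x)$ the factor $1-x^2q^{2i}$ never vanishes, so the bilateral function $p(n)=(x^2q^2;q^2)_n$ on $\BZ$ satisfies the everywhere-invertible first-order recurrence $p(n+1)=(1-x^2q^{2(n+1)})\,p(n)$, hence is $q$-holonomic (cf.\ Example \ref{ex.fg1}(f)), and telescoping gives $(x^2q^{2(n-k+1)};q^2)_k=p(n)/p(n-k)$ for all $n\in\BZ$ and $k\ge 0$; multiplying by your zero-extended reciprocal $\widehat{1/D}(k)$ then yields $H$ globally, with no orthant analysis at all --- generic $x$ removes precisely the vanishing that forced the quadrant-by-quadrant treatment of $F$. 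With that one correction, your proof is complete and is a legitimate alternative to the paper's Gauss-expansion argument for (b) and (c).
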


\begin{proof}
(a) There are 4 orthants (i.e. quadrants) of $\BZ^2$: 
$\BN_{+,+}, \BN_{-,+}, \BN_{+,-}, \BN_{-,-}$. On the last two quadrants, 
$F=0$ and hence are $q$-holonomic.

On the quadrant $\BN_{+,+}$ (corresponding to $n, k\ge 0$), $F(n,k)$ 
is the product of 2 functions
$$ 
F(n,k) =  (q^n;q^{-1})_n \times \frac 1 { (q^{n-k}, q^{-1})_{n-k}} \,.
$$
Both factors, considered as a function on $\BZ^2$, are $q$-holonomic 
by Example \ref{ex.fg1} (with extension to $\BZ$ by Lemma \ref{lem.qholo1}) 
and Theorem \ref{r.sum}. Hence, by Theorem \ref{r.sum} and 
Proposition \ref{prop.NZ}, $F(n,k)$ is $q$-holonomic on $\BN_{+,+}$.

Let us consider the quadrant $\BN_{-,+}$. Denote $m= -n$. Then 
$(m,k)\in \BN^2$, and
$$ 
F(n,k) = (-1)^k q^{-km} q^{-k(k-1)/2} (q^{m+k-1};q^{-1})_{m+k-1} 
\frac 1 {  (q^{k};q^{-1})_{k} } \,.
$$
All factors, considered as a function on $\BZ^2$, are $q$-holonomic 
by Example \ref{ex.fg1} (with extension to $\BZ$ by Lemma \ref{lem.qholo1}) 
and Theorem \ref{r.sum}. Hence, by Theorem \ref{r.sum} and 
Proposition \ref{prop.NZ}, $F(n,k)$ is $q$-holonomic on $\BN_{-,+}$.

Proposition \ref{prop.patching} shows that $F(n,k)$ is $q$-holonomic 
on $\BZ^2$. 

Since 
$$ 
G(n,k) = F(n,k) \times \frac 1{(q^k;q^{-1})_k}\,,
$$
where the second factor, considered as a function on $\BZ$, is 
$q$-holonomic, $G(n,k)$ is $q$-holonomic.

(b) 
For the $q$-hypergeometric function $H$, we can give a proof as in the 
case of $F$ and $G$. Alternatively, we can also deduce it using the closure
properties of $q$-holonomic functions as follows. We have
\begin{align*}
\prod_{j=1}^k   (x q^{n -j+1}- x^{-1} q^{-n+j-1})
= (-1)^k q^{-kn + \binom k2} x^{ -k} (x^2 q^{2(n-k+1) }; q^2)_k
\end{align*}
Using the Gauss binomial formula~\cite[Chpt.5]{KacCheung},
$$
( x;q^2)_k= \sum_{j=0} ^k (-1)^j 
q^{-k j-j} \binom{k}{j}_{q^2} x^j,
$$
we have
\be
\label{eq.HKidentity}
{ x; n \brack k} = 
(-1)^k (q-q^{-1})^k q^{k(k-1)} \frac{1}{(q;q)_k}
\sum_{j=0} ^k (-1)^j 
q^{2nj-3kj+j } \binom{k}{j}_{q^2} 
x^{2j}
\ee
The right hand side is a terminating sum of known $q$-holonomic functions.
Hence the extended $q$-binomial coefficient is $q$-holonomic.

(c) Let $x=q^\ell$ in Equation \eqref{eq.HKidentity}. The right hand side is 
a terminating sum of known $q$-holonomic functions, hence $K$ is $q$-holonomic.
\end{proof}

\begin{remark}
\label{rem.hilbertdim}
The above proof uses the closure properties of the class of $q$-holonomic 
functions. It is possible 
to give a proof using the very definition of $q$-holonomic functions via 
the Hilbert dimension.

\no{ For completeness, we
give a sketch of such an alternative proof, using the same notation for the
functions $F$, $G$ and $H$ as in Lemma \ref{lem.FGH}.

For the proof of part (a) of Lemma \ref{lem.FGH}, 
let $\sL_k, \sM_k, \sL_n, \sM_n$ denote the corresponding generators of the
quantum Weyl algebra $\cW_2$ in two variables.
Recall the filtration $\calF_N$ of the quantum Weyl
algebra from Equation~\eqref{eq.FN}.
Observe that $F,G$ and $H$ are $q$-hypergeometric functions with support
in the lattice points of a suitable union of cones. 

The support of $F$ consists of the lattice points in two cones $\{(n,k) \,
| n \geq k \geq 0\} \subset \BN_{+,+}$ and $\{(n,k) \, | n \leq 0, k \geq 0\}=
\BN_{-,+}$. By Proposition \ref{prop.NZ} it suffices to show that the
restriction of $f$ on $\BN_{+,+}$ and on $\BN_{-,+}$ is $q$-holonomic.
For the restriction of $f$ on $\BN_{+,+}$, 
consider the function $\tilde F(n,k)=F(n-1+k,k)$ with support $\BN^2$.
Let $F_+$ denote the restriction of $\tilde F$ on $\BN^2$.
Since $F_+(n,k)=(q;q)_{n+k-1}/(q;q)_{n-1}$ is $q$-hypergeometric, 
it satisfies the recurrence relations
\begin{align*}
\frac{F_+(n+1,k)}{F_+(n,k)} &= \frac{1-q^{n+k}}{1-q^{n}} \\
\frac{F_+(n,k+1)}{F_+(n,k)} &= 1-q^{n+k}
\end{align*}
for $(n,k) \in \BN^2$. It follows that the operators $P_1, P_2$ given by
\begin{align*}
P_1 &= (\sM_n-1)\sL_n - \sM_k \sM_n + 1 \\
P_2 &= \sL_k + \sM_k \sM_n -1  
\end{align*}
annihilate $F_+$. 
Replacing $\sL_k$ by the remaining terms of $P_2$, and $\sM_n \sL_n$
by the remaining terms of $P_1$ it follows that 
$\calF_N F_+$ is spanned by monomials $\sM_n^a \sM_k^b \sL_n^c \sL_k^d$ with
$a,b,c,d \in \BN$ and $a+b+c+d \leq N$ and $d=0$ and $ac=0$. 
The number of such monomials is $O(N^2)$. 
It follows that the dimension of $\cW_{2,+} F_+$ is at most $2$.
Thus $F_+$ is $q$-holonomic. Proposition~\ref{prop.NZ} implies
that $\tilde F$ is $q$-holonomic, and part (c) of Theorem~\ref{r.sum} 
concludes that the restriction of $F$ to $\BN_{+,+}$ is $q$-holonomic.
Likewise, the reader can confirm that the restriction of $F$ to 
$\BN_{-,+}$ is $q$-holonomic.

Regarding $G$, its support is $(\BZ^2 \cap C_1) \cup (\BZ^2 \cup C_3)$ where
the cones $C_i$ are given by $C_1=\la (1,0), (1,1) \ra$ and 
$C_3=\la (1,0), (-1,0) \ra$. Here, $\la v_1, v_2 \ra$ denotes the cone
spanned the vectors $v_1$ and $v_2$. Let $G_1$ and $G_2$ denote the 
restriction of $G$ on the cones $C_1$ and $C_2$ respectively. Then, we have:
$$
G=G_1 + G_2 - \delta_{n,0} \delta_{k,0} \,.
$$
If we prove that $G_1$ and $G_2$ are $q$-holonomic, parts (a) and (b) of 
Theorem~\ref{r.sum} implies that $G$ is $q$-holonomic too.

To prove that $G_1$ is $q$-holonomic, consider $\tilde G_1(n,k)=G_1(n+k,k)$.
Then, the support of $\tilde G_1$ is $\BN^2$. Let $G_{1,+}$ denote the
restriction of $\tilde G_1$ on $\BN^2$. Since 
$G_{1,+}(n,k)=(q;q)_{n+k}/((q;q)_k (q;q)_{k})$ is $q$-hypergeometric, 
it satisfies the recurrence relations
\begin{align*}
\frac{G_{1,+}(n+1,k)}{G(n,k)} &= \frac{1-q^{n+k+1}}{1-q^{n+1}} \\
\frac{G_{1,+}(n,k+1)}{G(n,k)} &= \frac{1-q^{n+k+1}}{1-q^{k+1}}
\end{align*}
for $(n,k) \in \BN^2$. It follows that the operators $Q_1, Q_2$ given by
\begin{align*}
Q_1 &= (q\sM_n  - 1)\sL_n - q \sM_n \sM_k +1 \\
Q_2 &= (q\sM_k  - 1)\sL_k - q \sM_n \sM_k +1
\end{align*}
annihilate $G_{1,+}$. Replacing $ \sM_n \sL_n $ by the remaining terms of
$Q_1$ and $ \sM_k \sL_k $ by the remaining terms of $Q_2$, it follows that
$\calF_N G_{1,+}$ is spanned by monomials $\sM_n^a \sM_k^b \sL_n^c \sL_k^d$ with
$a,b,c,d \in \BN$ and $a+b+c+d \leq N$ where $ac=0$ and $bd=0$.
The number of such monomials is $O(N^2)$. 
It follows that the dimension of $\cW_{2,+} G_{1,+}$ is at most $2$.
Thus $G_{1,+}$ is $q$-holonomic. Proposition~\ref{prop.NZ} below implies
that $\tilde G_{1}$ is $q$-holonomic and and part (c) of Theorem~\ref{r.sum} 
concludes that $G_1$ is $q$-holonomic. Likewise, $G_2$ is $q$-holonomic,
thus $G$ is $q$-holonomic.
}
\end{remark}


\section{Finiteness properties of $q$-holonomic functions}
\label{sec.Dfinite}

In this section we discuss finiteness properties of $q$-holonomic functions.

For any subset $\cL\subset \{\sL_1, \dots, \sL_r,\sM_1,\dots, \sM_r\}$ let 
$\TrL$ be the $\cR$-subalgebra of $\Tr$ generated by elements in $\cL$. For 
$i=1, \dots,r$ let $\cL_i=\{\sL_i,\sM_1,\dots, \sM_r\}$. 
Any non-zero element $P\in \TrLi$ has the form 
$$
P= \sum_{j=0}^k ( L_i)^j\,  a_j\,,
$$
where $a_j \in  \cR[\sM]:= \cR[\sM_1,\dots,\sM_r]$ and $a_k\neq 0$. We call 
$k$ the $L_i$-degree of $P$ and $a_k$ the {\em $L_i$-leading coefficient} 
of $P$.  

Consider the following finiteness properties for a function $f \in S_r(V)$.

\begin{definition} 
\label{def.finiteness}
Suppose $f\in S_r(V)$.
\newline
\rm{(a)}
We say that $f$ is strongly $\Tr$-finite (or that $f$ satisfies 
the elimination property) if 
for every subset $\cL$ of $\{\sM_1,\dots,\sM_r,\sL_1,\dots, \sL_r\}$ with
$r+1$ elements, $\Ann(f)\cap \TrL \neq \{0\}$.
\newline
\rm{(b)}
We say that $f$ is $\Tr$-finite if $\Ann(f)\cap \TrLi \neq \{0\}$ for every 
$i=1,\dots,r$.
\newline
\rm{(c)} We say that $f$ is integrally $\Tr$-finite if $\Ann(f)\cap \TrLi$ 
contains a non-zero element whose $L_i$-leading coefficient is 1, for every 
$i=1,\dots,r$.
\end{definition}

Our notion of $\Tr$-finiteness differs from the $\partial$-finiteness in
the Ore algebra $\BQ(q,\sM)\la \sL \ra$ considered in Koutschan's 
thesis~\cite[Sec.2]{Koutschan}. In particular, the Dirac $\delta$-function
$\delta_{n,0}$ is $q$-holonomic and $\cT_1$-finite (as follows from 
Theorem~\ref{thm.Dfinite} below) but not 
$\partial$-finite~\cite[Sec.2.4]{Koutschan}. 

The following summarizes the relations among the above flavors of finiteness.

\begin{theorem} 
\label{thm.Dfinite} 
Suppose $f\in S_r(V)$. One has the following implications among properties of 
$f$:
$$ 
\text{integrally $\Tr$-finite} \Rightarrow \text{$q$-holonomic}  
\Rightarrow \text{ strongly $\Tr$-finite} \Rightarrow \text{ $\Tr$-finite} \,.
$$
In other words,
\begin{itemize}
\item[(a)] If $f$ is integrally $\Tr$-finite, then $f$ is $q$-holonomic.
\item[(b)] If $f$ is $q$-holonomic, then $f$ is strongly $\Tr$-finite.
\item[(c)] If $f$ is strongly $\Tr$-finite, then $f$ is $\Tr$-finite.
\end{itemize}
\end{theorem}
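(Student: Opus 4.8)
The plan is to prove the three implications (a), (b), (c) separately, since each has a distinct flavor. The chain runs from the strongest hypothesis to the weakest conclusion, so I would expect (c) to be nearly formal, (b) to require the homological dimension theory and the elimination/restriction apparatus from Sabbah, and (a) to be the genuine obstacle, since it requires turning an integrality condition on leading coefficients into a dimension bound.

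For part (c), the implication from strongly $\Tr$-finite to $\Tr$-finite, I would argue directly from the definitions. Strong $\Tr$-finiteness asserts $\Ann(f) \cap \TrL \neq \{0\}$ for \emph{every} subset $\cL$ of the $2r$ generators with $r+1$ elements. For each fixed $i$, the set $\cL_i = \{\sL_i, \sM_1, \dots, \sM_r\}$ has exactly $r+1$ elements, so $\Ann(f) \cap \TrLi \neq \{0\}$ is the special case of strong $\Tr$-finiteness applied to $\cL = \cL_i$. Hence $\Tr$-finiteness follows immediately, and this step is essentially a bookkeeping observation.

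For part (b), that $q$-holonomic implies strongly $\Tr$-finite, the plan is to fix a subset $\cL$ with $|\cL| = r+1$ and show $\Ann(f) \cap \TrL \neq \{0\}$ by a dimension-counting (elimination) argument on the good filtration $\calF_k$ from Equation~\eqref{eq.FN}. The key idea is that $\TrL$, being generated by $r+1$ of the variables, has a filtration whose $k$-th piece has $\cR$-dimension growing like $k^{r+1}$, whereas the image $\calF_k \TrL \cdot f \subset \calF_k M_f$ has dimension $O(k^r)$ because $M_f$ is $q$-holonomic with $\dim(M_f) \le r$. Comparing a space of dimension $\sim k^{r+1}$ mapping into one of dimension $O(k^r)$ forces a nonzero kernel for $k$ large, and any element of that kernel lies in $\Ann(f) \cap \TrL$. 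I would work over $\Trp$ using Proposition~\ref{prop.WWr} so that the filtration has the polynomial-growth behavior recorded before Equation~\eqref{eq.codim}, and take care that the monomials in the $r+1$ chosen variables are genuinely $\cR$-independent in the associated graded, which holds because $\Tr \cong \cT^{\otimes r}$ is a domain.

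For part (a), the hard part, I would show that integral $\Tr$-finiteness forces $d(M_f) \le r$, again via the filtration dimension bound, now working over $\Trp$ by Proposition~\ref{prop.WWr}. The point of the leading coefficient being $1$ (rather than an arbitrary nonzero polynomial in $\cR[\sM]$) is that it lets one rewrite high powers of $\sL_i$ in terms of lower ones \emph{without} inverting or clearing denominators that depend on $\sM$, so that a monomial $\sM^\alpha \sL^\beta$ acting on $f$ reduces to a combination in which each $\sL_i$-exponent stays bounded by the $L_i$-degree $d_i$ of the monic annihilator. Thus $\calF_k M_f$ is $\cR$-spanned by $\sM^\alpha \sL^\beta f$ with each $\beta_i < d_i$ and $|\alpha| \le k$, a set of size $O(k^r)$, whence $d(M_f) \le r$ and $f$ is $q$-holonomic. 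The main obstacle I anticipate is making the reduction uniform: the commutation relations \eqref{eq.w3} introduce powers of $q$ and shift the $\sM$-variables when one pushes an $\sL_i$ past the coefficients, so I must verify that reducing the $\sL_i$-degree using the monic relation keeps the $\sM$-degree growth linear in $k$ and does not secretly reintroduce high $\sL_j$-powers for $j \neq i$; handling the $r$ elimination operators simultaneously, in a consistent order, is where the care is needed.
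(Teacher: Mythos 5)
Your proposal is correct and follows essentially the same route as the paper: (c) is the same bookkeeping observation, (b) is the paper's argument of passing to $\Trp$ via Proposition~\ref{prop.WWr} and comparing the $O(k^{r+1})$ growth of the filtration on $\TrL$ (where $d(\TrL)=r+1$) against the $O(k^r)$ growth of $\calF_k(\Trp f)$, and (a) is the paper's argument of using the monic annihilators $p_i$ to span $\calF_N f$ by monomials $\sM^\alpha \sL^\beta f$ with each $\beta_i$ bounded by the $L_i$-degree $k_i$, forcing $d(\Trp f)\le r$. The subtlety you flag in (a) --- that rewriting via the monic relations inflates the $\sM$-degree only linearly in $N$ and does not reintroduce high $\sL_j$-powers --- is genuine but routine, and the paper's proof in fact passes over it in silence.
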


\begin{proof} (c) is clear. 

For (a), 
suppose for each $i=1,\dots, r$ there is a non-zero $p_i\in \Ann(f)\cap \TrLi$ 
with $L_i$-leading coefficient  1 and $L_i$-degree $k_i$. 
Assume  $p_i = \sL_i^{k_i} + \sum_{j=0}^{k_i-1} L_i^j  a_{i,j}$, where 
$a_{i,j} \in \cR[\sM_1,\dots,\sM_r]$. 
Recall from that $\calF_N \subset \Trp$ is the $\cR$-span
of all monomials $\sM^\alpha \sL^\beta$ of total degree 
$|\alpha|+|\beta| \leq N$.  Then,
$\calF_N f$ is in the  $\cR$-span of $\sM^\alpha \sL^\beta f$ where
$|\alpha|+|\beta| \leq N$ and either $\beta=(\beta_1,\dots,\beta_r)$ satisfies
$\beta_i \leq k_i$ for $i=1,\dots,r$. The number of such monomials is
$O(N^r)$. Consequently, the dimension of $\Trp f$ is at most $r$, so
$f$ is $q$-holonomic with respect to $\Trp$. By Proposition \ref{prop.WWr}, 
$f$ is $q$-holonomic over $\Tr$.

For (b), suppose $\cL$ is a subset of 
$\{\sM_1,\dots,\sM_r,\sL_1,\dots, \sL_r\}$ with $r+1$ elements.  
Note that $d(\TrL)=r+1$. Suppose $f$ is  $q$-holonomic over $\Tr$. By 
Proposition \ref{prop.WWr}, $f$ is $q$-holonomic over $\Trp$ and hence 
$d(\Trp/\Ann_+(f))\le r$. Here $\Ann_+(f)=\Ann(f) \cap \Trp$.
It follows that $\Ann_+(f) \cap \TrL \neq \{0\}$, implying $f$ is strongly 
$\Tr$-finite.
\end{proof}


\begin{remark}
\label{rem.Dfinite}
The converse to (c) of Theorem~\ref{thm.Dfinite} does not hold.
Indeed, if $R(u) \in \cR(u)$ is a rational function in
$r$-variables $u=(u_1,\dots,u_r)$ and the function
$$
f: S_r(\BQ(q)), \qquad n \in \BZ^r \mapsto f(n)=R(q^n)
$$
is well-defined, then it is $\Tr$-finite. On the other hand, $f$
rarely satisfies the elimination property, hence it is not
$q$-holonomic in general. Concretely, 
C. Koutschan pointed out to us the following example:
\be
\label{eq.f2}
f: \BZ^2 \to \BQ(q), \qquad
f(n,k)=\frac{1}{q^n+q^k+1} \,.
\ee
It is obvious that $f$ is $\cT_2$-finite. On the other hand, 
$f(n,k)$ does not satisfy the elimination property for
$\{\sM_n,\sL_k,\sL_n\}$, hence it is not $q$-holonomic. 
To show this, assume the contrary. Then, there exists a nonzero operator
$$
P = \sum_{i,j} c_{i,j}(q^n,q) \sL_k^i \sL_n^j
$$
(for a finite sum)
where the $c_{i,j}$ are bivariate polynomials in $q$ and $q^n$. If $P$
annihilates $f$, this means:
$$
\sum_{i,j} \frac{c_{i,j}(q^n,q)}{q^{n+j} + q^{k+i} + 1} = 0 \,.
$$
Now observe that in no term can there be a cancellation, since the
numerator depends only on $q^n$. Next observe that the denominators of all
terms in the sum are pairwise coprime. Hence the expression on the
left-hand side is zero if and only if all $c_{i,j}$ are zero. This gives
a contradiction.
\end{remark}

We end this section by discussing a finite description of $q$-holonomic
functions, which is the core of an algorithmic description of $q$-holonomic
functions. For holonomic functions of continuous variables, the next theorem 
is known as the zero recognition problem, described in detail 
by Takayama~\cite[Sec.4]{Takayama}.

\begin{theorem}
\label{prop.finite}
Suppose $f\in S_r(V)$ is $q$-holonomic. Then there exists a finite set
$S \subset \BZ^r$ such that $f|_S$ uniquely determines $f$. In other words, 
if $g\in S_r(V)$ such that $\Ann(f)=\Ann(g)$ and $f|_S= g|_S$, then $f=g$.
\end{theorem}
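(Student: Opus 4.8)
The plan is to reduce the statement to a propagation problem and solve it by induction on $r$. Set $h=f-g\in S_r(V)$. For every $P\in\Ann(f)=\Ann(g)$ we have $Ph=Pf-Pg=0$, so $\Ann(f)\subseteq\Ann(h)$; thus it suffices to produce a finite set $S\subset\BZ^r$, depending only on the ideal $I:=\Ann(f)$, with the property that for \emph{every} $h\in S_r(V)$ satisfying $I\subseteq\Ann(h)$ one has $h|_S=0\Rightarrow h=0$. I would in fact aim for the stronger and more useful conclusion that each value $h(m)$ is a \emph{universal} $\cR$-linear combination of the values $\{h(s):s\in S\}$, with coefficients depending on $I$ and $m$ but not on $h$. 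This form is preferable because the propagation coefficients lie in $\cR$ while $h$ is $V$-valued, so the arbitrary $\cR$-vector space $V$ requires no separate treatment.

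The tools I would use are the recurrences produced by holonomicity. Since $f$ is $q$-holonomic it is strongly $\Tr$-finite by Theorem~\ref{thm.Dfinite}(b); in particular, for each $i=1,\dots,r$ there is a nonzero $P_i\in\Ann(f)\cap\TrLi$, say $P_i=\sum_{j} \sL_i^{\,j}\,a_{i,j}(\sM)$. After multiplying by a suitable power of the invertible $\sL_i$ I may assume that both the $\sL_i$-leading coefficient and the $\sL_i$-trailing coefficient are nonzero elements of $\cR[\sM]$. For fixed values of the remaining coordinates, $P_i$ then becomes a genuine one-variable recurrence in $n_i$, and the base case $r=1$ is exactly the argument sketched in Remark~\ref{rem.determines}: the leading and trailing coefficients are nonzero polynomials in $q^{n_i}$, so they vanish for only finitely many $n_i$, and a finite interval of values (enlarged to straddle these finitely many exceptional indices) determines $h$ by forward and backward propagation.

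For $r\ge 2$ the key geometric input is the structure of the exceptional locus $E_i\subset\BZ^r$ where the leading or trailing coefficient of $P_i$ vanishes. Because $q$ is transcendental, the integer zero locus of a nonzero Laurent polynomial in $q^{n_1},\dots,q^{n_r}$ is contained in a finite union of affine sublattices of codimension at least one (a top-degree monomial must collide with another, forcing a linear condition $\langle\alpha-\alpha',n\rangle=\mathrm{const}$). Off $E:=\bigcup_i E_i$ every direction propagates, so starting from a finite seed box all values outside $E$ are determined. On a codimension-one stratum of $E$ — a hyperplane-coset isomorphic to $\BZ^{r-1}$ — I would restrict $h$ using Theorem~\ref{r.sum}(d) and an element of $\GL(r,\BZ)$ via Theorem~\ref{r.sum}(c), obtaining a $q$-holonomic function on $\BZ^{r-1}$ to which the induction hypothesis applies, determining that stratum from finitely many values (some of which are already supplied by the propagation off $E$). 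Iterating down the codimension, the residual free data sits in $\bigcap_i E_i$, of codimension $r$, hence finite.

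The main obstacle is precisely this stratified propagation: I must show that the finitely many ``seed'' values do not blossom into infinitely many undetermined values along the strata of $E$. When the exceptional hypersurfaces of different directions are in general position this is just the codimension count above, but when several $E_i$ coincide on a common hyperplane the single recurrences $P_i$ no longer suffice, and one must invoke the \emph{additional} elimination relations guaranteed by strong $\Tr$-finiteness (nonzero elements of $\Ann(f)\cap\TrL$ for every $(r+1)$-element $\cL$) to continue across that stratum. That mere $\Tr$-finiteness is inadequate here is exactly the content of the Koutschan example in Remark~\ref{rem.Dfinite}, where $1/(q^n+q^k+1)$ is $\cT_2$-finite yet not determined by finite data; so full holonomicity must be used essentially in organizing the cross-direction propagation. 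Once this is carried out, taking $S$ to be the seed box together with the finitely many residual exceptional points yields the desired finite set, and $f|_S=g|_S$ with $\Ann(f)=\Ann(g)$ forces $f=g$.
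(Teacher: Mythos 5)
Your reduction to $h=f-g$, the one--variable base case, and the observation that the integer zero locus of a nonzero element of $\cR[\sM^{\pm1}]$ is contained in finitely many affine hyperplane cosets are all sound. But the argument has a genuine gap exactly where you flag it: you never supply the mechanism for propagating across strata where the directional recurrences degenerate simultaneously. Your codimension count (``the residual free data sits in $\bigcap_i E_i$, of codimension $r$, hence finite'') holds only in general position. Because the recurrences $P_i\in\Ann(f)\cap\TrLi$ have coefficients involving all of $\sM_1,\dots,\sM_r$, the cosets making up the $E_i$ are arbitrary sublattices $\langle\beta,n\rangle=c$: two different $E_i$ can share a coset, and a coset of $E_i$ can be parallel to the shift direction $e_i$, so that $P_i$ degenerates along an entire $\BZ^{r-1}$'s worth of points and no combination of the $P_i$ reaches them. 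Moreover the stuck points of forward propagation lie on \emph{translates} of the exceptional cosets (the leading coefficient vanishing at $m$ obstructs the value at $m+k_ie_i$), which your stratification does not account for. Saying that one ``must invoke the additional elimination relations'' to cross such a stratum is a restatement of the problem, not a solution: you do not exhibit which relation in $\Ann(f)\cap\TrL$ to use or how it restores propagation, and this is the heart of the theorem. (There is also an unaddressed uniformity issue: the inductive determining set for $h|_H$ depends on $\Ann(h|_H)$, hence on the unknown $g$; your ``universal coefficients'' strengthening is the right idea but is asserted rather than established.)

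The paper's proof contains precisely the idea you are missing, and it dissolves the stratification problem rather than confronting it. Instead of the $r$ recurrences $P_i$, it extracts from strong $\Tr$-finiteness (Theorem \ref{thm.Dfinite}(b)) a \emph{single} relation $P\in\Ann(f)\cap\TrL$ for $\cL=\{\sM_1,\sL_1,\dots,\sL_r\}$, i.e., one whose coefficients are polynomials in $\sM_1$ alone. Ordering the $\sL$-exponents of $P$ lexicographically, the lex-leading coefficient is $p(q,q^{n_1})$, which vanishes for only finitely many $n_1$; a secondary induction on the lexicographic order then determines $f$ on each orthant outside finitely many \emph{coordinate} hyperplanes $n_1=\mathrm{const}$. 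By construction the exceptional locus is a finite union of coordinate hyperplanes --- no general-position hypothesis, no interacting or translated strata, no iterated codimension count --- and the restriction of $f$ to each such hyperplane is again $q$-holonomic by Theorem \ref{r.sum}(c),(d), so the main induction on $r$ finishes. If you wish to salvage your architecture, this is the missing ingredient: choose the eliminated set $\cL$ to contain a single $\sM$-variable, which forces all degeneracies into finitely many parallel coordinate hyperplanes and makes your ``seed box plus strata'' bookkeeping collapse to the paper's two-level induction.
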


\begin{proof}
We use induction on $r$. For $r=1$, this follows from 
Remark~\ref{rem.determines}. Suppose this holds for $r-1$. Since $f$
is strongly $\Tr$-finite, it follows that $f$ is annihilated by a nonzero
element $P=P(\sM_1,\sL_1,\dots,\sL_r) \in \Tr$. The $\sL$-exponents of $P$
is a finite subset of $\BN^r$. Recall the lexicographic total order 
$\alpha=(\alpha_1,\dots,\alpha_r) < \beta=(\beta_1,\dots,\beta_r)$ in 
$\BN^r$ (when $\alpha \neq \beta$) defined by the existence of $i_0$ such that 
$\alpha_i = \beta_i$ for $i < i_0$ and $\alpha_{i_0} < \beta_{i_0}$. 
Let $\sL^\alpha$ denote the leading term of $P$ in the lexicographic order.
Its coefficient is $p(q,q^{n_1})$ which is nonzero for all but finitely
many values of $n_1$. It follows by a secondary induction that the restriction 
of $f$ on $\BN^r$ is uniquely determined by its restriction on 
$\cup_{0 \leq k \leq r-1} (\BN^k \times J \times \BN^{r-1-k})$ for some finite
subset $J$ of $\BN$. Applying the same proof to the remaining $2^r-1$
orthants of $\BZ^r$ 
and enlarging $J$ accordingly (but keeping it finite), it follows 
that $f$ is uniquely determined by its restriction on 
$\cup_{0 \leq k \leq r-1} (\BZ^k \times J \times \BZ^{r-1-k})$ for some finite
subset $J$ of $\BZ$. Without loss of generality, we can assume that $J$
contains one element. Since $f$ is $q$-holonomic, it follows by parts (c)
and (d) of Theorem~\ref{r.sum} that its restriction on 
$\BZ^k \times J \times \BZ^{r-1-k}$ is $q$-holonomic too. The induction
hypothesis concludes the proof.
\end{proof}


\section{Algorithmic aspects}
\label{sec.algorithms}

From the very beginning, Zeilberger emphasized the algorithmic aspects of
his theory of holonomic functions, and a good place to start is the
book $A=B$~\cite{PWZ}. Algorithms and closure properties for the class of
$\Tr$-finite functions (of one or several variables) were developed and
implemented by several authors that include Chyzak, Kauers,
Salvy~\cite{Chyzak,Ka1} and especially Koutschan~\cite{Ko2}. A core-part of
those algorithms is elimination of $q$-commuting variables. The definition
of $q$-holonomic functions discussed in our paper is amenable to such
elimination, and we would encourage further implementations.


\section*{Acknowledgements}

The authors would like to thank Claude Sabbah and Christoph Koutschan for
useful conversations. The authors were supported in part by the US National
Science Foundation.

\bibliographystyle{hamsalpha}
\bibliography{biblio}
\end{document}